\documentclass[11pt,reqno]{amsart}
\usepackage[T1]{fontenc}
\usepackage{lmodern}
\usepackage[english]{babel}
\usepackage{graphicx}
\usepackage{amsmath}
\usepackage{amssymb}
\usepackage{mathrsfs}
\usepackage{bookmark}
\usepackage[numbers,sort&compress]{natbib}
\numberwithin{equation}{section}
\usepackage{amsthm}
\usepackage[a4paper]{geometry}
\newcommand\Item[1][]{%
  \ifx\relax#1\relax  \item \else \item[#1] \fi
  \abovedisplayskip=0pt\abovedisplayshortskip=0pt~\vspace*{-\baselineskip}}

\geometry{nomarginpar,verbose,tmargin=3cm,bmargin=3cm,lmargin=3cm,rmargin=3cm}

\theoremstyle{plain}
 \newtheorem{theorem}{Theorem}[section]

\newtheorem{lemma}[theorem]{Lemma}
 
\theoremstyle{definition}

 \newtheorem{remark}[theorem]{Remark}
 \numberwithin{equation}{section}

\usepackage{caption}
\usepackage{tcolorbox}

\newcommand{\CA}{\mathcal A}			\newcommand{\CB}{\mathcal B}
\newcommand{\CC}{\mathcal C}			\newcommand{\calD}{\mathcal D}

			\newcommand{\CL}{\mathcal L}
\newcommand{\CM}{\mathcal M}			
			\newcommand{\CP}{\mathcal P}
			
			\newcommand{\CT}{\mathcal T}

				\newcommand{\BF}{\mathbb F}

				\newcommand{\BZ}{\mathbb Z}

\DeclareMathOperator{\Map}{Map}

\DeclareMathOperator{\Homeo}{Homeo}
\DeclareMathOperator{\Out}{Out}
\DeclareMathOperator{\Aut}{Aut}
\DeclareMathOperator{\Stab}{Stab}

\DeclareMathOperator{\D}{\partial}
\DeclareMathOperator{\Pol}{Pol}

\DeclareMathOperator{\const}{const}

\usepackage{mathtools}

\usepackage{pinlabel}
\usepackage{xcolor}

\definecolor{mygreen}{rgb}{0.0, 0.5, 0.0}
\definecolor{deepmagenta}{rgb}{0.8, 0.0, 0.8}
\definecolor{deepcarrotorange}{rgb}{0.91, 0.41, 0.17}
\definecolor{heartgold}{rgb}{0.5, 0.5, 0.0}

\newcommand{\xuparrow}[1]{%
  {\left\uparrow\vbox to #1{}\right.\kern-\nulldelimiterspace}
}
\definecolor{kellygreen}{rgb}{0.3, 0.73, 0.09}

\usepackage{subfigure}
\usepackage{BOONDOX-cal}
\usepackage{enumitem}
\usepackage{appendix}
\graphicspath{{images/}}

\begin{document}

\title[Deciding when two curves are of the same type]{Deciding when two curves are of the same type}

\author[Juan Souto]{Juan Souto}
\address{UNIV RENNES, CNRS, IRMAR - UMR 6625, F-35000 RENNES, FRANCE}
\email{jsoutoc@gmail.com}

\author[Thi Hanh Vo]{Thi Hanh Vo}
\address{DEPARTMENT OF MATHEMATICS, UNIVERSITY OF LUXEMBOURG}
\email{thihanh.vo@uni.lu}

\thanks{The second author was supported by ANR/FNR project SoS, INTER/ANR/16/11554412/SoS, ANR-17-CE40-0033 and the travel grant DPMA 1054-07-Z}
\subjclass[2010]{Primary: 53C22. Secondary: 37E30.}
\keywords{Closed geodesics, mapping class group, Dehn-Thurston coordinates}
\date{\today}

\begin{abstract} 
Given two closed curves in a surface, we propose an algorithm to detect whether they are of the same type or not.
\end{abstract}

\maketitle


\section{Introduction}

Whitehead's algorithm \cite{MR1503309} serves to determine if two elements $\gamma$ and $\eta$ in a finitely generated non-abelian free group $\BF$ differ by an automorphism of the latter, that is if there is $\phi\in\Aut(\BF)$ with $\phi(\gamma)=\eta$. An algorithm solving the same problem for surface groups is due to Levitt and Vogtmann \cite{MR1783856}. Both algorithms use different methods, but the basic strategy is surprisingly similar, consisting of two separate steps.

In a first step one brings the involved elements into a suitable "normal form". In the case of the free group one uses Whitehead automorphisms to reduce step by step their lengths. In the case of  surface groups one conjugates the given elements so that they are represented by paths with minimal self-intersection number. In either case the time complexity of this first step is at worst quadratic: the complexity of the involved elements is strictly reduced at every step and one stops when it is no longer possible to do so.

Once we are done with step one we go into the second step. Here the goal is to decide if two elements in normal form belong to the same connected component of the graph whose vertex set is the set of elements in normal form and where two vertices are joined if they differ by an elementary move: a Whitehead automorphism in the case of the free group and, in the case of the surface group, sliding a strand over a double point followed by a self-homeomorphism of the surface. Note that although the graph is infinite, the algorithm terminates because the function which associates to each vertex its complexity is constant on the connected components of the graph, meaning that the involved connected components are finite. However, the number of elements in normal form and with given complexity is exponential, meaning that a priori the second step has exponential time complexity. In fact, it seems that the running time of the Whitehead algorithm is only known to be polynomial in the case that $n=2$ \cite{Khan}. We do not know of any efficient estimate on the running time of the Levitt-Vogtmann algorithm.

The goal of this note is to give a different algorithm, one of polynomial time complexity, to decide if two elements of a surface group differ by an automorphism:

\begin{theorem}\label{thm-main1}
Let $\Sigma$ be a closed orientable connected surface of genus $g\ge 2$. There is a polynomial time algorithm which decides if two elements $\gamma,\eta\in\pi_1(\Sigma,*)$ differ by a group automorphism and which, if possible, finds $\phi\in\Aut(\pi_1(\Sigma,*))$ with $\phi(\gamma)=\eta$.
\end{theorem}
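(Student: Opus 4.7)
The plan is to reformulate the problem as one about closed curves on $\Sigma$ and then use Dehn--Thurston coordinates to produce a polynomial-size combinatorial invariant that characterises mapping class group orbits. By the Dehn--Nielsen--Baer theorem, $\Out(\pi_1(\Sigma,*))$ is isomorphic to $\Map^{\pm}(\Sigma)$, and consequently $\gamma$ and $\eta$ differ by an element of $\Aut(\pi_1(\Sigma,*))$ if and only if the free homotopy classes of the closed curves they represent lie in the same $\Map^{\pm}$-orbit. Producing an automorphism from a mapping class is essentially free, up to correcting basepoints with an inner automorphism, so the real content is to decide the curve-equivalence question and efficiently exhibit the relating mapping class.

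After fixing a pants decomposition $\CP = \{c_1,\ldots,c_{3g-3}\}$ of $\Sigma$, I would encode each of the input curves by its Dehn--Thurston coordinates: the $2(3g-3)$-tuple of geometric intersection numbers with the $c_i$ together with twisting numbers. Given $\gamma$ as a word of length $n$ in a fixed generating set of $\pi_1$, its DT coordinates are integers of bit-length $O(n)$ and can be computed in polynomial time, for instance via a train track that carries $\gamma$ or a universal-cover traversal. A key feature is that the action of $\Map(\Sigma)$ on DT coordinates is piecewise integer-linear and explicit on standard generators.

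Next I would identify a complete invariant for the $\Map^{\pm}$-orbit of a closed curve $\gamma$: the homeomorphism type of the pair $(\Sigma, \gamma_{\min})$, where $\gamma_{\min}$ is a minimal self-intersection representative. This is a finite labelled object, namely the $4$-valent ribbon graph of $\gamma_{\min}$ together with the topological types of the components of $\Sigma\setminus\gamma_{\min}$. The crucial step is to show that this invariant admits a \emph{polynomial-size} combinatorial description which can be extracted directly from the DT coordinates, without ever listing the possibly exponentially many self-intersections of $\gamma_{\min}$. Inside each pair of pants the curve decomposes into a bounded number of arc types whose multiplicities are integer-linear functions of the DT coordinates, and the twisting coordinates control how these parallel arc bundles are glued across each $c_i$. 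Collapsing each bundle of parallel arcs to a single edge yields a labelled graph of size polynomial in $n$ that encodes the topological type.

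With both $\gamma$ and $\eta$ now represented by polynomial-size labelled data, deciding whether they are of the same type reduces to an isomorphism test on labelled graphs of bounded local structure, solvable in polynomial time. When the types agree, matching the two collapsed graphs yields an explicit mapping class sending $[\gamma]$ to $[\eta]$, expressible as a short word in standard generators of $\Map(\Sigma)$ via the piecewise-linear action on DT coordinates; lifting through Dehn--Nielsen--Baer and correcting for the basepoint produces the desired $\phi\in\Aut(\pi_1(\Sigma,*))$. The main obstacle is the third step: extracting and manipulating the topological invariant from the DT coordinates in polynomial time, since a naive pipeline would first build the minimal representative whose combinatorial size is exponential in $n$. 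The remaining ingredients---Dehn--Nielsen--Baer, the DT computation, the labelled graph isomorphism, and the lift back to $\Aut(\pi_1)$---are either classical or routine.
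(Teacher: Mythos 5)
Your proposal takes a genuinely different route from the paper. The paper reduces Theorem~\ref{thm-main1} to Theorem~\ref{thm-main}, and proves the latter by a \emph{search-over-candidates} strategy: it fixes an ordered pants decomposition $\vec P$ together with a filling companion $B$, uses Lemma~\ref{mainlem} to bound the $\CT$-length of any plausible image $\phi(\vec P)$ when $\phi(\gamma)=\eta$, enumerates the polynomially many pants decompositions meeting that bound, builds a homotopy equivalence realizing each of them, and finally bounds the remaining multi-twist ambiguity by Ivanov's Lemma~\ref{Ivanov}. This yields a polynomial-size list of candidate mapping classes, each of which is then tested directly. You, by contrast, aim to compute for each curve a \emph{complete combinatorial invariant} of its $\Map^\pm$-orbit, namely the homeomorphism type of $(\Sigma,\gamma_{\min})$ where $\gamma_{\min}$ is a minimal self-intersection representative, encoded as a $4$-valent ribbon graph with complementary-region data, reducing the decision problem to a labelled-graph isomorphism test. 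These really are two different algorithms: one never forms a minimal-position representative, the other is built around it.

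However, as you acknowledge yourself, the central technical step of your proposal is left open, and a few concrete issues need addressing. First, the well-definedness of your invariant requires that minimal self-intersection representatives of a free homotopy class be unique up to ambient isotopy (Hass--Scott); you never invoke this, yet without it the ``homeomorphism type of $(\Sigma,\gamma_{\min})$'' is not even a well-defined quantity, let alone a complete invariant. Second, the complexity estimates motivating your ``collapsing'' step look wrong: a word of length $n$ yields a curve crossing the $1$-skeleton $O(n)$ times, hence a minimal self-intersection number $O(n^2)$, so the ribbon graph of $\gamma_{\min}$ already has polynomial size and the coordinates are of magnitude $O(n)$, not of bit-length $O(n)$. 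There is no exponential blowup to sidestep; worse, as described the collapse is tied to the chosen pants decomposition and so is not a $\Map^\pm$-invariant at all---two curves of the same type can have wildly different arc-multiplicity and twisting data with respect to a fixed $\vec P$, which would break completeness. Third, you still need a polynomial-time algorithm producing $\gamma_{\min}$ (or its ribbon graph) from a word, and you need to note that isomorphism for labelled $4$-valent ribbon graphs is polynomial; and since the theorem asks for the automorphism $\phi$, the abstract graph isomorphism must be upgraded to an explicit element of $\Aut(\pi_1(\Sigma,*))$.

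In summary, the high-level idea of computing a complete pair invariant is defensible, and the exponential blowup you worry about is illusory, but the argument as written does not establish well-definedness (no Hass--Scott), the collapsing step would lose exactly the information needed, and the extraction of the invariant is asserted rather than carried out. The paper's enumeration-plus-Ivanov strategy avoids minimal-position machinery entirely and produces the mapping class directly as a byproduct, which is why it goes through cleanly.
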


In the statement of Theorem \ref{thm-main1}, polynomial time means that the running time is bounded from above by a polynomial function in $\max\{\vert\gamma\vert,\vert\eta\vert\}$ where $\vert\cdot\vert$ stands for the word length with respect to any finite generating set.

Note that the conjugacy problem in the surface group $\pi_1(\Sigma,*)$ has a linear time solution because the latter is hyperbolic \cite{Holt}. This means that to prove Theorem \ref{thm-main1} it is enough to give an algorithm deciding if there is an outer automorphism $\phi\in\Out(\pi_1(\Sigma,*))$ which maps the conjugacy class of $\gamma$ to that of $\eta$. From this point of view, it is easy to frame the whole problem in topological rather than algebraic terms. For once, conjugacy classes of elements in the fundamental group correspond to free homotopy classes of curves. But not only that. We can namely also identify $\Out(\pi_1(\Sigma,*))$ with the (full) mapping class group
$$\Map^\pm(\Sigma) = \Homeo(\Sigma) / \Homeo_0(\Sigma)$$
of $\Sigma$, where $\Homeo(\Sigma)$ is the group of self-homeomorphisms of $\Sigma$ and $\Homeo_0(\Sigma)$ is its identity component. In those terms we can basically rephrase Theorem \ref{thm-main1} as asserting that there is a polynomial time algorithm deciding if two curves are of the {\em same type}, meaning that the corresponding free homotopy classes differ by a mapping class.
The following is then our main result:

\begin{theorem}\label{thm-main}
Let $\Sigma$ be a compact orientable connected surface whose genus $g\ge 2$ and number of boundary components $b$, satisfy $2-2g-b<0$. There is a polynomial time algorithm which decides if two elements $\gamma,\eta\in\pi_1(\Sigma,*)$ are of the same type or not, finding if possible some $\phi\in\Map^\pm(\Sigma)$ with $\eta=\phi(\gamma)$.
\end{theorem}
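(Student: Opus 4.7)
The strategy is to encode each free homotopy class by a polynomially-computable combinatorial datum in Dehn--Thurston form, and to decide equivalence under $\Map^\pm(\Sigma)$ by reducing each datum to a canonical form and comparing topological invariants extracted from it.

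First, fix a pants decomposition $\CP=\{c_1,\ldots,c_{3g-3+b}\}$ of $\Sigma$ together with a system of transverse seams. From the input word representing $\gamma\in\pi_1(\Sigma,*)$ one computes in polynomial time the Dehn--Thurston coordinates $(n_i,t_i)$ of the free homotopy class of $\gamma$, together with the cyclic ``pants word'' recording the sequence of elementary arcs that $\gamma$ traverses inside each pair of pants. This datum has size $O(\vert\gamma\vert)$, depends only on the free homotopy class, and encodes in particular the minimal self-intersection number $i(\gamma,\gamma)$, the subsurface $F_\gamma\subset\Sigma$ filled by $\gamma$, and the homeomorphism type of $\Sigma\setminus F_\gamma$. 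The same computation is carried out for $\eta$.

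Second, reduce to the filling case. A necessary condition for $\gamma$ and $\eta$ to be of the same type is that the topological pairs $(F_\gamma,\Sigma\setminus F_\gamma)$ and $(F_\eta,\Sigma\setminus F_\eta)$ agree, together with the combinatorial pairing between their boundary components; all of this can be read off from the Dehn--Thurston data in polynomial time. If this test passes, the problem localizes: it suffices to decide whether $\gamma$ and $\eta$, now viewed as filling curves in a common subsurface $F$, are equivalent under $\Map^\pm(F)$, since any such equivalence can be extended to $\Sigma$ by an arbitrary homeomorphism matching the complementary pieces.

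Third, decide equivalence of filling curves via a canonical form. Since $\Map^\pm(F)$ acts on Dehn--Thurston coordinates through explicit, polynomial-time formulas for the elementary moves (Dehn twists, half-twists and pants-flip moves), one descends from the coordinates of $\gamma$ by repeatedly applying moves that strictly decrease a complexity function such as $\sum_i(n_i^2+t_i^2)$, until no further decrease is possible. Two filling curves are of the same type iff they share a common minimal representative, and in that case the recorded product of moves yields the sought-after $\phi\in\Map^\pm(\Sigma)$ witnessing $\phi(\gamma)=\eta$.

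The main obstacle I expect lies in the third step: proving both that the descent to a minimum of the complexity function terminates after polynomially many moves, and that two curves belonging to the same $\Map^\pm(F)$-orbit admit comparable (ideally unique) minima. This is precisely the pain point of the Whitehead and Levitt--Vogtmann algorithms, where the set of minimal representatives can a priori be of exponential size. The fix must exploit the rigidity of the Dehn--Thurston action of $\Map^\pm(F)$ on a filling curve, together with the topological invariants identified in the second step, to break symmetry and pin down a unique normal form in polynomial time.
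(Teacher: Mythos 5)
Your Steps 1--2 (compute a combinatorial encoding of the free homotopy class, reduce to the filling case by comparing the filled subsurfaces and the complementary pieces) are essentially the same reduction the paper carries out, and that part is sound. The genuine gap is your Step 3, and you correctly flag it yourself: you propose to descend to a canonical minimal form by applying elementary moves that decrease $\sum_i(n_i^2+t_i^2)$, and then compare minima. This is exactly the ``normal form plus explore the level set'' strategy of Whitehead and Levitt--Vogtmann, and the paper's whole point is that this strategy is not known to be polynomial: the set of representatives at a given complexity level can a priori be exponentially large, and there is no reason for the descent to converge to a unique (or even polynomially-bounded) collection of minima. Acknowledging the problem and gesturing at ``rigidity must break the symmetry'' does not constitute a proof; nothing in your write-up explains why the local minima are few, comparable, or reachable in polynomially many steps.

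The paper avoids this obstruction by inverting the logic. Rather than normalizing $\gamma$ and $\eta$, it enumerates a polynomial-size set $\CC$ of \emph{candidate mapping classes} that could possibly send $\gamma$ to $\eta$, and then simply checks each one. The enumeration rests on two quantitative inputs your proposal lacks. First, Lemma~\ref{mainlem} (or its Dehn--Thurston analogue, Lemma~\ref{lot of noise2}): if $\phi(\gamma)=\eta$ and $\vec P$ is a fixed pants decomposition, then $\ell_\CT(\phi(\vec P))\le\ell_\CT(\eta)\cdot\iota(\vec P,\gamma)$, because $\eta=\phi(\gamma)$ is filling. This bounds the length of the image pants decomposition, so there are only polynomially many candidates for $\phi(\vec P)$, hence polynomially many candidates for $\phi$ up to a multitwist along $\vec P$. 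Second, Ivanov's intersection-number lower bound for multitwists (Lemma~\ref{Ivanov}) bounds the twisting exponents $\vec n$ polynomially. Together these give the finite list $\CC$, and membership $\phi(\gamma)=\eta$ is checkable by the conjugacy algorithm for hyperbolic groups. If you want to salvage your approach, you would need a theorem of the form ``the descent on Dehn--Thurston coordinates of a filling curve reaches a unique minimum in polynomially many steps,'' which is not in the literature and would itself be a substantial result; the paper's candidate-enumeration route sidesteps the need for any such statement.
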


The following is the basic strategy of the proof of Theorem \ref{thm-main}, at least in the central case that $\gamma$ and $\eta$ are primitive and fill. We produce in polynomial time a collection $\CC$ of mapping classes with the following properties:
\begin{itemize}
\item Any mapping class $\phi$ with $\phi(\gamma)=\eta$ belongs to $\CC$.
\item There are polynomially many elements in $\CC$ and for each one of them it takes polynomial time to decide if $\phi(\gamma)=\eta$.
\end{itemize}
Once we have this list then we just have to check for each $\phi\in\CC$ if $\phi(\gamma)=\eta$. If we find one then $\gamma$ and $\eta$ are of the same type. If not, then not.

\begin{remark}
As it is evident from the given sketch, our algorithm actually solves a more general problem than the one we are discussing here. Indeed, if $G\subset\Map^\pm(\Sigma)$ is an arbitrary subgroup whose membership problem is solvable in polynomial time, then we can decided again in polynomial time if two filling curves $\gamma$ and $\eta$ differ by an element of $G$: our algorithm gives in polynomial a complete list of all those $\phi\in\Map^\pm(\Sigma)$ with $\phi(\gamma)=\eta$, meaning that we just have to check one after the other if any of them belongs to $G$. For $\gamma$ and $\eta$ non-filling, things are a bit more complicated but it all boils down then to be able to check in polynomial time if a subsurface of $\Sigma$ is in the $G$-orbit of another subsurface. The reader will have no difficulty filling in the details after studying the proof of Theorem \ref{thm-main}. 
\end{remark}

This paper is organised as follows. In section \ref{sec:preli} we fix some terminology that we will use throughout the paper and prove a lemma explicitly relating two notions of length of a curve: length with respect to a triangulation and intersection number with a filling curve. 

In section \ref{sec: swiss guy} we present some basic things that one can do in polynomial time, things like for example checking if two curves are freely homotopic or detecting if two simple multicurves are of the same type. 

Armed with all these tools we prove Theorem \ref{thm-main} in section \ref{proof-thm-main}. In the proof we first add additional assumptions such as for example that the curves are primitive, dealing with the remaining cases at the end of the proof. Once Theorem \ref{thm-main} has been proved, Theorem \ref{thm-main1} follows easily. 

In section \ref{sec: dehn-thurston} we then sketch a small variation of the proof of Theorem \ref{thm-main} (for filling curves) replacing the triangulations we will have been using until that point by Dehn-Thurston coordinates. Working with Dehn-Thurston coordinates is more involved than working with triangulations---and this is why we choose the latter for the bulk of the paper---but it gives slightly tighter results. For example we get a polynomial of degree $18g+6b-14$ bounding the complexity of the algorithm.

\subsection*{Acknowledgements.} The first author would like to thank Monika Kudlinska for explaining to him her algorithm \cite{Monika} to detect if a curve is filling. The present paper benefited very much from Kudlinska's work.
The second author thanks IRMAR, Universit\'e de Rennes 1 for its hospitality while this work was initiated.
She would also like to thank Hugo Parlier for making this collaboration happen.

\section{Preliminaries}\label{sec:preli}

Let from now on $\Sigma$ be as in Theorem \ref{thm-main}, that is a compact orientable connected surface of genus $g\ge 2$ and $b$ boundary components satisfying $2-2g-b<0$. Let us also fix $*\in\Sigma$ a base-point and let $\vert\cdot\vert$ stand for the word length of elements in $\pi_1(\Sigma,*)$ with respect to some finite generating set. 

\subsection{Terminology}

Throughout this note we will tacitly identify free homotopy classes of un-oriented curves with actual representatives, referring to both as {\em curves}. In particular, we will also identify curves with the conjugacy classes of elements (and their inverses) in $\pi_1(\Sigma)$. A closed curve is \textit{essential} if it is neither homotopically trivial nor homotopic to a boundary component, and it is {\em primitive} if it is not homotopic to any proper power of another curve.  A primitive essential curve is {\em simple} if the corresponding free homotopy class has representatives which are embedded. A {\em multicurve} is a finite union of distinct essential primitive curves. If the components of a multicurve are simple and have disjoint representatives, then the multicurve is a {\em simple multicurve}. A multicurves is {\em ordered} if its components are ordered. Ordered multicurves will be decorated with an arrow on top, for example $\vec\gamma$ or $\vec P$. The prime example of a simple multicurve, ordered of not, is a pants decomposition, that is a collection of $3g-3+b$ disjoint essential simple curves whose complement consists of $2g-2+b$ pairs of pants. A multicurve $\gamma$ is \textit{filling} if we have $\iota(\gamma,\eta)>0$ for every essential curve $\eta$, where $\iota(\cdot,\cdot)$ stands for the geometric intersection number. Filling multicurves are never simple, but the union of simple curves might well be filling.

The mapping class group $\Map^{\pm}(\Sigma)$ acts on the sets of curves, multicurves, simple curves, ordered simple multicurves, etc... and we say that two such are \textit{of the same type} if they are in the same $\Map^{\pm}(\Sigma)$-orbit. Note that if two ordered pants decompositions $\vec P$ and $\vec P'$ are of the same type then there are many mapping classes sending one to the other. Indeed, if $\phi\in\Map^{\pm}(\Sigma)$ is such that $\phi(\vec P)=\vec P'$ then the set of those mapping classes $\psi$ with $\psi(\vec P)=\vec P'$ agrees with the set of those of the form $\phi\circ\sigma$ where $\sigma\in\Stab_{\Map^{\pm}(\Sigma)}(\vec P)$. The stabiliser of an ordered pants decomposition $\vec P$ admits a simple description, mostly if we restrict our attention to the {\em pure mapping class group}
$$P\Map(\Sigma)=\left\{\phi\in\Map^{\pm}\ \middle\vert\begin{array}{l}
\text{ orientation preserving and mapping }\\
\text{ each boundary component to itself}\end{array}\right\}.$$
Indeed, the stabiliser in the pure mapping class group of an ordered pants decomposition $\vec P$ is nothing other than the subgroup generated by the Dehn twists $T_{P_i}$ along the components $P_1,\dots,P_{3g-3+b}$ of $\vec P$, together with the center $C(\Map(\Sigma))$ of the mapping class group. The center $C(\Map(\Sigma))$ is trivial unless the pair $(g,b)$ consisting of genus and number of boundary components of $\Sigma$ is one of the following: $(1,1)$, $(1,2)$ or $(2,0)$. In those cases its only non-trivial element is the hyper-elliptic involution. We refer the reader to \cite{MR2850125} for background on the mapping class group.

\subsection{Curves in normal form with respect to a triangulation}
Suppose now that $\CT$ is a triangulation of $\Sigma$, and denote its set of edges by $E(\CT)$ and its 1-skeleton by $\CT^{(1)}$. A closed immersed 1-dimensional manifold $\lambda$ in $\Sigma$ is in {\em normal form} with respect to $\CT$, or just $\CT$-normal, if it avoids all the vertices of $\CT$, is transversal to the edges, and intersects each 2-simplex $\Delta$ in a collection of arcs, whose vertices are in different edges of $\Delta$. The {\em $\CT$-length} of a multicurve $\gamma$ in $\Sigma$ is then defined to be smallest possible number that a $\CT$-normal representative of the free homotopy class $\gamma$ meets the edges of the triangulation. Using slightly different words:
$$\ell_\CT(\gamma)=\min\{\vert\alpha\cap\CT^{(1)}\vert\text{ where }\alpha\text{ is a }\CT\text{-normal representative of }\gamma\}.$$
We prove next the following relation between $\CT$-lengths and intersection numbers:

\begin{lemma}\label{mainlem}
If $\alpha$ is a filling multicurve in $\Sigma$ then we have
$$\ell_\CT(\beta)\le\ell_{\CT}(\alpha)\cdot\iota(\beta,\alpha)\cdot(1+\iota(\beta,\beta))$$
for every other multicurve $\beta$ in $\Sigma$. If moreover $\Sigma$ is closed then the statement remains true even after deleting the factor $1+\iota(\beta,\beta)$.
\end{lemma}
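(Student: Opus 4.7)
The plan is to place $\alpha$ in $\CT$-normal position realising $\ell_\CT(\alpha)$, to homotope $\beta$ into simultaneous minimal position with $\alpha$ (so $|\beta\cap\alpha|=\iota(\beta,\alpha)$) and with itself (so $\beta$ realises $\iota(\beta,\beta)$ self-crossings), and then to isotope $\beta$ inside each component of $\Sigma\setminus\alpha$ to make it $\CT$-normal while controlling the new intersections with $\CT^{(1)}$. Because $\alpha$ is filling, each component $R$ of $\Sigma\setminus\alpha$ is either a disk or---only when $\partial\Sigma\ne\emptyset$---an annulus one of whose boundary components lies on $\partial\Sigma$. Writing $n(R)$ for the number of arcs of $\beta\cap R$ and $p(R)$ for the number of transverse intersections of $\CT^{(1)}$ with the $\alpha$-part of $\partial R$, a direct count gives $\sum_R n(R)=\iota(\beta,\alpha)$ and $\sum_R p(R)=2\ell_\CT(\alpha)$.

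The heart of the argument is the following observation for a disk region $D$: any proper arc $a\subset D$ with endpoints on $\partial D\setminus\CT^{(1)}$ can be isotoped rel endpoints to cross $\CT^{(1)}$ at most $\lfloor p(D)/2\rfloor$ times. Indeed, the endpoints of $a$ split $\partial D$ into two sub-arcs meeting $\CT^{(1)}$ in $s_1$ and $s_2$ points with $s_1+s_2=p(D)$; pushing $a$ into a thin collar of the side containing $\min(s_1,s_2)$ points, the pushed arc meets exactly the $\min(s_1,s_2)\le p(D)/2$ edges of $\CT$ emanating from that side of $\partial D$, and no others. Summing arc by arc over the disk regions and using $\sum_{D}a_Db_D\le\bigl(\sum_D a_D\bigr)\bigl(\sum_D b_D\bigr)$ for non-negative $a_D,b_D$ yields
$$\sum_{D}n(D)\cdot\frac{p(D)}{2}\le\frac{1}{2}\Bigl(\sum_D n(D)\Bigr)\Bigl(\sum_D p(D)\Bigr)\le\iota(\beta,\alpha)\cdot\ell_\CT(\alpha).$$
When $\Sigma$ is closed every region is a disk and this establishes the sharper inequality directly.

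When $\partial\Sigma\ne\emptyset$ one must additionally account for the annular regions, and this is precisely where the factor $1+\iota(\beta,\beta)$ enters. In an annular region $A$ each arc of $\beta\cap A$ has a well-defined winding number around the core of $A$, and an isotopy rel endpoints cannot alter this winding because the boundary component of $A$ lying on $\partial\Sigma$ blocks the move. Cutting $A$ open along a short transverse arc reduces the situation to the disk case at the cost of a factor $|w|+1$ per arc of winding $w$. The main obstacle is thus to bound the total winding summed over all annuli: since two arcs in the same annulus whose windings differ by $k$ are forced to meet at least $|k|$ times, the minimal-position hypothesis on $\beta$ bounds the total winding linearly in $\iota(\beta,\beta)$ (with an additive term controlled by $\iota(\beta,\alpha)$), and this extra contribution is absorbed by the multiplicative factor $1+\iota(\beta,\beta)$.
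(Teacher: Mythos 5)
Your proposal follows the same route as the paper: put $\alpha$ in $\CT$-normal position, choose $\beta$ realising $\ell_\CT(\beta)$ and in minimal position with $\alpha$, cut $\beta$ along $\alpha$ into $\iota(\beta,\alpha)$ arcs living in disk or annular complementary regions, and push each arc towards the boundary of its region. Your treatment of the disk case is correct and in fact more explicit than the paper's (the paper simply asserts $|I_i\cap\CT|\le\tfrac12|\partial\Delta\cap\CT|$; you justify it by the ``push to the side with fewer points'' argument, and your double-counting inequality is a harmless extra step since the cruder bound $p(D)/2\le\ell_\CT(\alpha)$ already suffices).

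In the annular case, however, the specific intersection count you invoke is not the one that does the work. You appeal to the fact that two arcs in the same annulus whose windings differ by $k$ must cross at least $|k|$ times, and claim this bounds the total winding by $\iota(\beta,\beta)$ plus a term controlled by $\iota(\beta,\alpha)$. But this observation gives nothing when all the arcs in a given annulus share the same large winding $w$: their pairwise winding differences vanish, yet the total winding is $n\cdot w$ and the corresponding contribution to $\ell_\CT(\beta)$ is unbounded. What is actually needed --- and what the paper implicitly uses when it writes that ``wrapping around forces its self-intersection number to grow'' --- is the \emph{single-arc} estimate: a properly embedded-up-to-isotopy arc in an annulus with both endpoints on the core-side boundary component and winding $w$ must have at least $|w|-1$ self-crossings. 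Summing this over all annular arcs gives $\sum_a(|w_a|-1)\le\iota(\beta,\beta)$, hence $\sum_a|w_a|\le\iota(\beta,\beta)+\#\{\text{annular arcs}\}\le\iota(\beta,\beta)+\iota(\beta,\alpha)$, and your ``cut open at the cost of a factor $|w|+1$'' step then closes the argument (using $\iota(\beta,\alpha)\ge 1$ for $\alpha$ filling to absorb the additive term into the multiplicative form of the stated bound). So the fix is small and the overall strategy is the intended one, but as written the annulus step has a real gap.
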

\begin{proof}
Abusing notation we might identify $\alpha$ and $\beta$ with suitably chosen representatives. In other words, we might suppose that $\alpha$ and $\beta$ are actual curves instead of merely free homotopy classes, that they are in $\CT$-normal form and in general position with respect to each other, and that moreover we have
$$\vert\alpha\cap\beta\vert=\iota(\alpha,\beta)\text{, }\ell_{\CT}(\alpha)=\vert\alpha\cap\CT\vert\text{ and }\ell_{\CT}(\beta)=\vert\beta\cap\CT\vert.$$
Note that the assumption that $\alpha$ fills means that each component $\Delta$ of $\Sigma\setminus\alpha$ is either a disk or an annulus containing a boundary component. If $\Sigma$ is closed then only disks are possible.

Note also that the boundary of each such disk $\D\Delta$ meets $\CT$ at most $2\ell_{\CT}(\alpha)$ times. Now, the curve $\alpha$ cuts $\beta$ into segments $I_1,\dots,I_r$ where $r=\iota(\alpha,\beta)$. Each such segment $I_i$ is contained in a connected component $\Delta$ of $\Sigma\setminus\alpha$ and can be homotoped relative to its endpoints into $\D\Delta$. Since $\beta$ is supposed to have the minimal number of intersections with $\CT$ we deduce that
$$\vert I_i\cap\CT\vert\le\frac 12\vert\D\Delta\cap\CT\vert\le\ell_\CT(\alpha)$$
if $\Delta$ is a disk. If $\Delta$ is an annulus then $I_i$ could wrap a few times around, but wrapping around forces its self-intersection number, and thus that of $\beta$ to grow, meaning that we have
$$\vert I_i\cap\CT\vert\le\vert\D\Delta\cap\CT\vert\cdot(1+\iota(\beta,\beta))\le\ell_\CT(\alpha)\cdot(1+\iota(\beta,\beta))$$
It follows that
$$\ell_\CT(\beta)=\vert\beta\cap\CT\vert=\sum_{i=1}^{\iota(\alpha,\beta)}\vert I_i\cap\CT\vert\le\ell_\CT(\alpha)\cdot\iota(\alpha,\beta)$$
if the surface is closed and that 
$$\ell_\CT(\beta)=\vert\beta\cap\CT\vert=\sum_{i=1}^{\iota(\alpha,\beta)}\vert I_i\cap\CT\vert\le\ell_\CT(\alpha)\cdot\iota(\alpha,\beta)\cdot(1+\iota(\beta,\beta))$$
in general.
\end{proof}

Suppose now that we are given a triangle $\Delta$ with sides $e_1,e_2,e_3$ and recall that whenever we have three integers $m_1,m_2,m_3\ge 0$ satisfying
\begin{equation}\label{eq-star}
m_1+m_2+m_3\in 2\BZ, \ 
m_1\le m_2+m_3, \
m_2\le m_3+m_1, 
\text{ and }
m_3\le m_1+m_2, 
\end{equation}
then there is an, up to isotopy, unique configuration $J(m_1,m_2,m_3)$ consisting of $\frac{m_1+m_2+m_3}2$ disjoint simple arcs in $\Delta$, each one with endpoints in different sides, and such that $J$ meets $e_i$ exactly $m_i$ times. 

\begin{figure}[h]
\labellist
\small\hair 2pt
\pinlabel {$m_3 = 3$} at 540 320
\pinlabel {$m_1 = 4$} at 0 320
\pinlabel {$m_2 = 5$} at 280 70
\endlabellist
\centering
\includegraphics[width=0.5\textwidth]{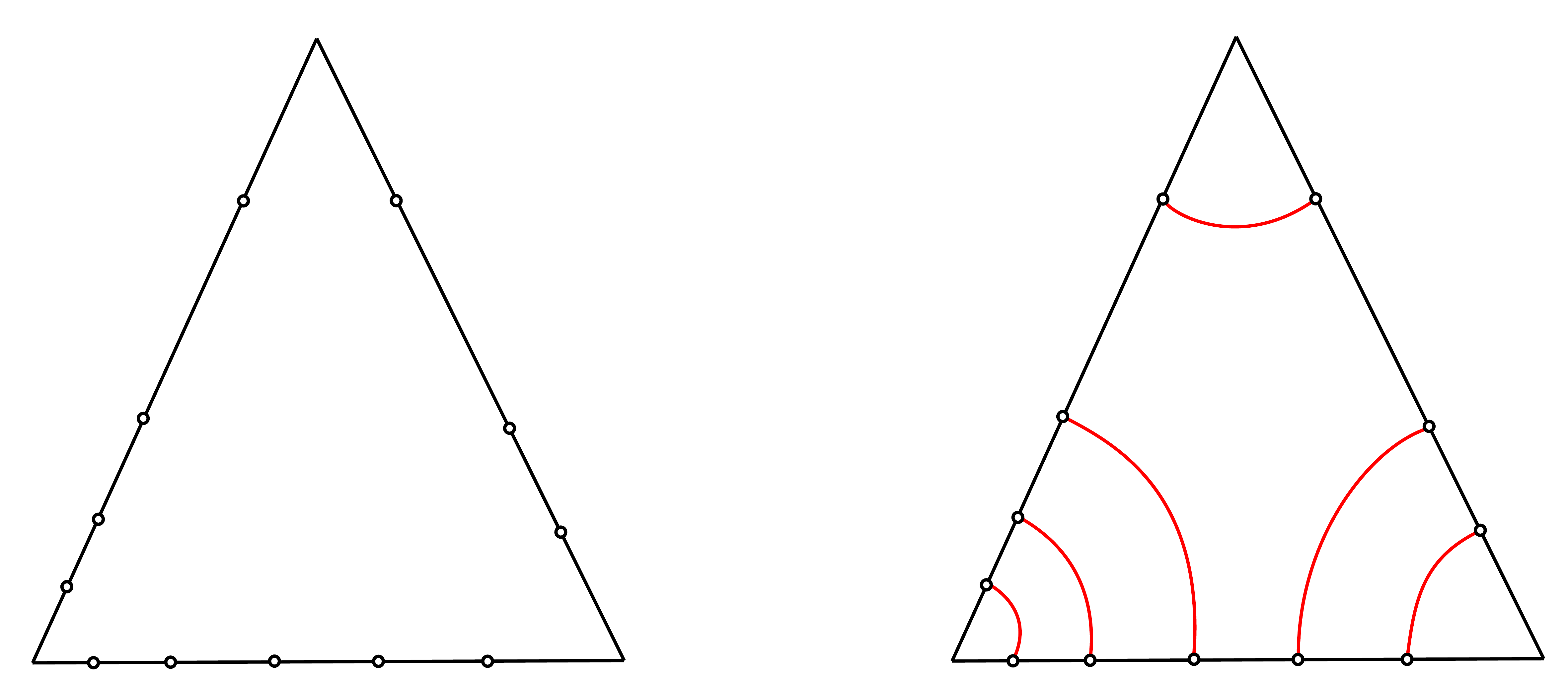}
\caption{Disjoint simple arcs in a triangle $\Delta$.}
\end{figure}       

Returning to our triangulation $\CT$ let $\CA=\CA(\CT)$ be the set of vectors $\vec m=(m_e)_{e\in E(\CT)}\in\BZ_{\ge 0}^{E(\CT)}$ such that the triple $(m_{e_i},m_{e_j},m_{e_k})$ satisfies \eqref{eq-star} whenever $e_i$, $e_j$, and $e_k$ are the three edges of a triangle in $\CT$. We say that elements in $\CA$ are {\em admissible}. 

Suppose that $\vec m\in\CA$ is admissible. Since \eqref{eq-star} are satisfied for every triangle in $\CT$ we get an arc system $J(m_{e_i},m_{e_j},m_{e_k})$ for each triangle in $\CT$ (with sides $e_i$, $e_j$, and $e_k$). Gluing all those arc-systems to each other we get a simple, $\CT$-normal, embedded 1-manifold $\gamma_{\vec m}$ with
\begin{equation}\label{I am sick of this}
\vert\gamma_{\vec m}\cap e\vert=m_e
\end{equation}
for every edge $e\in E(\CT)$ of $\CT$.

\section{Some things we can do in polynomial time}\label{sec: swiss guy}
Continue with the same notation, suppose that we are given a triangulation $\CT$ of $\Sigma$ and a system of generators for the fundamental group $\pi_1(\Sigma,*)$. We think of the generators as being represented by actual $\CT$-transversal curves. 

We recall/discuss now some procedures for which there are polynomial time algorithms. It is very important to keep in mind that that we can run those algorithms inside other such algorithms and still have polynomial time: $P(Q(X))$ is a polynomial in $X$ if $P(X)$ and $Q(X)$ are. We will also use $\Pol(X)$ to denote a polynomial in $X$, but we will allow the concrete polynomial to change over time. For example, $\Pol(\Pol(X))=\Pol(X)$ is a perfectly sound statement. Similarly for polynomials of several variables. All polynomials under consideration will have positive entries and will be only fed positive quantities.

\begin{remark}
We should now point out that while having implementable algorithms is nice and dandy, this is not the standard we are aiming at. For us, an algorithm is just a clear procedure that your chosen, not extraordinarily creative but very reliable, punctual, clean, rule following and decently trained Swiss worker would be able to implement in polynomial time. It goes without saying that he or she would be equipped with illimited amounts of supplies like paper or multicolour pens. For example, to check if the multicurve $\gamma_{\vec m}$ corresponding to an admissible vector $\vec m\in\BZ_{\ge 0}^{E(\CT)}$ is connected one could ask our personal assistant to {\em first please draw carefully the multicurve with a pencil and a ruler, then take a black marker and, without lifting the marker, follow the line drawn in pencil until it closes up. To conclude check if there is some pencil left of if all has been covered by the marker}. In the first case the multicurve was not connected, in the latter it was. 
\end{remark}

\subsection*{(R1) Homotope a $\CT$-transversal curve into $\CT$-normal form}
Take your curve $\gamma$ and check if there are unallowed arcs of intersection with 2-simplexes of $\CT$, that is a subarc $I$ of $\gamma$ contained in a 2-simplex $\Delta$ in $\CT$ and whose endpoints $\D I$ belong to the same side of $\Delta$. If you find one such unallowed arc then homotope it away into the adjacent 2-simplex. Repeat until there are no more unallowed arcs. Once there are no unallowed arcs we are, by definition, in $\CT$-normal form. This process is polynomial in $\vert\gamma\cap\CT\vert$.

\subsection*{(R2) Pass to/from $\CT$-normal curves from/to words in the generators} 
Let $X\subset\Sigma$ be the dual graph of $\CT$, adding if necessary an arc from the base point $*$ to $X$. Take $T\subset X$ a maximal tree in $X$. Each (oriented) edge in $X\setminus T$ represents an element in $\pi_1(\Sigma,*)$. Write it in terms of the given generators. Now, represent the free homotopy class of the your given curve $\gamma$ in $\CT$-normal form by a curve in $X$. Write the associated word. This process is polynomial in $\vert\gamma\cap\CT\vert$.

Conversely, a word $\gamma$ in the generators is a particular kind of curve in $\Sigma$. Run (R1) to freely homotope it to a curve in $\CT$-normal form. This process is polynomial in $\vert\gamma\vert$.

\begin{remark}
There are many algorithms dealing with words in the generating set, or at least where the input is given via words. In the light of (R2) we can make use of all those algorithms. We also get that, when it comes to the complexity, it does not matter if we consider $\vert\gamma\cap\CT\vert$ or $\vert\gamma\vert$. Unless we want to stress which one is the right one, we will just say {\em the complexity of} $\gamma$.
\end{remark}

\subsection*{(R3) Check if elements are trivial or equal}
Surface groups are hyperbolic. This implies that they have linear time solvable word problem, meaning that it takes linear time in the complexity of $\gamma$ to detect if a word $\gamma$ represents the trivial element in $\pi_1(\Sigma,*)$. In particular it takes also linear time to decide if two words represent the same element in $\pi_1(\Sigma,*)$. 

\subsection*{(R4) Check if elements are conjugate}
Surface groups have, again because they are hyperbolic, a linear time solvable conjugacy problem \cite{Holt}. This means that it takes linear time in the complexity of the curves $\gamma$ and $\eta$ to decide if they are freely homotopic to each other or not.

\subsection*{(R5) Compute intersection numbers}
There are many algorithms to compute the intersection number $\iota(\gamma,\eta)$ between curves $\gamma,\eta$. A rather effective one is due to Despr\'e and Lazarus \cite{Despre-Lazarus}. Their algorithm takes quadratic time in the complexity of the curves.

\subsection*{(R6) Decide if two triangulated surfaces are homeomorphic by a homeomorphism with prescribed boundary values}
Suppose that we are given two triangulated oriented surfaces $S_1,S_2$ and that we have a bijection $\pi_0(\D S_1)\to\pi_0(\D S_2)$. We want to figure out if there is a homeomorphism $S_1\to S_2$ inducing the given bijection. Well, we start by determining if there is a bijection $\pi_0(S_1)\to\pi_0(S_2)$ compatible with the one given between the sets of boundary components. If this is not the case then we are done. Otherwise compare the Euler-characteristic of each component of $S_1$ with that of the corresponding component of $S_2$. If all those numbers agree then there is a homeomorphism and of not then not. This process is linear in the size of the triangulations, that is for example in the number of 2-simplices.

\subsection*{(R7) If possible, give a proper orientation preserving homotopy equivalence (with prescribed boundary values) between two triangulated surfaces}

As in (R6) suppose that we are given two triangulated oriented surfaces $S_1,S_2$ and consider $\D S_1$ and $\D S_2$ with their induced orientations. Fix also a bijection $\pi_0(\D S_1)\to\pi_0(\D S_2)$. If possible, we want to construct a homotopy equivalence $S_1\to S_2$ which maps each component of $\D S_1$ homeomorphically to the corresponding component of $\D S_2$. Recall that such a homotopy equivalent exists if and only if there is a homeomorphism. We run (R6) to check if this if this is the case. If not then we dedicate our time to better things. If yes, what we actually get out of (R6) is a bijection between connected components of $S_1$ and $S_2$ compatible with the bijection of boundary components and which maps components of $S_1$ to homeomorphic components of $S_2$. Said shorter, we might assume from now on that $S_1$ and $S_2$ are connected and homeomorphic. We also barycentrically subdivide the triangulations a few times to avoid degenerate cases, $10\vert\chi(S_1)\vert$-times definitively suffice. 

We start by removing an open 2-simplex $\Delta_i$ from each one of the surfaces $S_i$. Starting on the new boundary component $\D\Delta_i$ we can collapse 2-cells one by one, obtaining thus a retraction of each surface $S_i\setminus\Delta_i$ to a graph $X_i$ contained in the 1-skeleton and with $\D S_i\subset X_i$. Let also $\hat\Delta_i$ be the disk obtained when we cut $S_i$ along $X_i$; it is easy to give a homeomorphism between $\Delta_i$ and $\hat\Delta_i$.

Choose now maximal trees $T_1\subset X_1$ and $T_2\subset X_2$ such that the intersection of $T_i$ with each one of the boundary components of $S_i$ is connected. Note that each homeomorphism $X_1/T_1\to X_2/T_2$ which maps the edge of $X_1/T_1$ corresponding to some component of $\D S_1$ to the corresponding edge of $X_2/T_2$ can be lifted to a homotopy equivalence $X_1\to X_2$ preserving the bijection between boundary components. To decide if this homotopy equivalence extends to a homotopy equivalence $S_1\to S_2$ we just need to check that it extends over to $\hat\Delta_1$. This is the case if and only if the curve $\D\hat\Delta_1$ is homotopically trivial in $S_2$ -- use (R3) to check this. If the homotopy extends it then extend it (we leave the reader the details how to do that). Otherwise try a different, by what we mean non-homotopic, homeomorphism $X_1/T_1\to X_2/T_2$ which maps the edge of $X_1/T_1$ corresponding to some component of $\D S_1$ to the corresponding edge of $X_2/T_2$. There are finitely many homotopy classes of homeomorphisms to check and one of them works, so run them all if needed. The complexity of this process if polynomial in the sizes of the given triangulations. 

\begin{remark}
Note that the bound for the complexity also implies that other quantities, for example the Lipschitz constant, associated to the obtained proper homotopy equivalence are also polynomial in the complexity of the input. Recall also that by the Baer-Dehn-Nielsen theorem \cite{MR2850125}, every proper homotopy equivalence $\Sigma\to\Sigma$ is properly homotopic to a homeomorphism, and that every two properly homotopic homeomorphisms are homotopic via homeomorphism. This means that every proper homotopy equivalence $\Sigma\to\Sigma$ represents a unique mapping class. 
\end{remark}

\begin{remark}
If instead of merely a bijection $\pi_0(\D S_1)\to\pi_0(\D S_2)$ we actually have a homotopy class $[\D S_1 \to \D S_2]$ of homotopy equivalences, we can also detect if there is a homeomorphism from $S_1$ to $S_2$ inducing the given homotopy class of boundary map; and if it exists, we can construct one such homeomorphism.
All of this in polynomial time.
We leave the details to the reader.
\end{remark}

\subsection*{(R8) Construct a triangulation adapted to a simple topological multicurve}
Suppose that we are given a simple multicurve $\gamma=\gamma_{\vec m}$ in terms of the admissible vector $\vec m\in\CA(\CT)\subset \BZ_{\ge 0}^{E(\CT)}$. Draw the multicurve and cut each 2-simplexes of $\CT$ along $\gamma$. In this way we get $\vert\CT\vert+\ell_\CT(\gamma)$ pieces. Among these pieces, leave the triangles as they are and add diagonals to triangulate squares, pentagons and hexagons. We obtain in this way a triangulation which contains $\gamma$ as a simplicial subcomplex. The time complexity of the process is polynomial in the complexities of $\gamma$ and $\CT$.

\subsection*{(R9) Detect if two simple multicurves have the same topological type}
It suffices to discuss the case of ordered multicurves, leaving the unordered case to the reader. Anyways, the first step is to run (R8) to construct triangulations adapted to both simple multicurves. Then use (R6) to decide if there is a homeomorphism of the complement of the first one to the complement of the other one, with boundary values given by the fact that we are mapping the curves in order and in an orientation preserving way. If such a homeomorphisms exists, then also does a homeomorphism $\Sigma\to\Sigma$ mapping the first curve to the second one. And if not, then not. The complexity is polynomial in the complexity of the curves and the triangulations.

\subsection*{(R10) Check if a curve $\gamma$ is filling and, if this is not the case, detect the up to isotopy smallest subsurface $\Sigma(\gamma)$ containing $\gamma$}

As we mentioned in the introduction, Kudlinska gave in \cite{Monika} a polynomial time algorithm detecting if a curve $\gamma$ fills the surface or not. Her algorithm is based on the idea that to check if a curve is filling, it suffices to check if it has positive intersection number with a finite number of curves, where the number depends on the complexity of the original curve. We sketch a small modification of her algorithm, using triangulations instead of Dehn-Thurston coordinates. 

The starting point of Kudlinska's algorithm was the observation that if a hyperbolic geodesic $\gamma$ fails to be filling, then the boundary of the subsurface filled by $\gamma$ is an essential curve disjoint of $\gamma$ and which is shorter than $\gamma$. Here is a combinatorial version of this fact:

\begin{lemma}\label{lemblablabla}
A $\CT$-normal curve $\gamma$ is filling if and only if we have $\iota(\gamma,\eta)>0$ for every $\CT$-normal simple multicurve $\eta$ with $\ell_\CT(\eta)\le 2\ell_{\CT}(\gamma)$. 
\end{lemma}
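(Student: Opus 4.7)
The plan is to prove the two implications separately. The forward direction is immediate from the definitions: if $\gamma$ fills, then $\iota(\gamma,\delta)>0$ for every essential curve $\delta$, and in particular for every component of every $\CT$-normal simple multicurve $\eta$; summing over components yields $\iota(\gamma,\eta)>0$.

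For the converse, I argue by contraposition. Assuming $\gamma$ does not fill, I shall construct a $\CT$-normal simple multicurve $\eta$ with $\iota(\gamma,\eta)=0$ and $\ell_\CT(\eta)\le 2\ell_\CT(\gamma)$. The natural candidate is obtained from the boundary of a thin regular neighborhood $N(\gamma)$ of $\gamma$: set $\eta_0=\D N(\gamma)$, which is automatically embedded and disjoint from $\gamma$. Let $\eta$ be the multicurve obtained by throwing away all components of $\eta_0$ that are either null-homotopic or boundary-parallel in $\Sigma$ and then keeping a single representative from each free homotopy class. This gives a simple multicurve in the sense of the paper, as essential simple closed curves are automatically primitive.

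Three things then need to be verified. First, that $\eta$ is non-empty: this is precisely the content of the fact (already used in the proof of Lemma \ref{mainlem}) that $\gamma$ fills if and only if every component of $\Sigma\setminus N(\gamma)$ is a disk or an annulus containing a component of $\D\Sigma$, which in turn is equivalent to every component of $\D N(\gamma)$ being null-homotopic or boundary-parallel in $\Sigma$. Second, that $\eta$ is $\CT$-normal: each arc of $\gamma\cap\Delta$ has endpoints on two distinct sides of $\Delta$, and the two parallel boundary arcs of $N(\gamma)\cap\Delta$ accompanying it inherit this same endpoint pattern. Third, the length bound: each intersection point of $\gamma$ with an edge $e$ is flanked on both sides by exactly one point of $\D N(\gamma)\cap e$, so $\vert\D N(\gamma)\cap\CT^{(1)}\vert=2\ell_\CT(\gamma)$, and discarding components only decreases this count, giving $\ell_\CT(\eta)\le 2\ell_\CT(\gamma)$.

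The main obstacle is the non-emptiness claim; everything else is routine bookkeeping near the triangulation. It should follow from a standard case analysis on the component of $\Sigma\setminus N(\gamma)$ containing an essential curve $\delta$ disjoint from $\gamma$, showing in either sub-case (namely, $\delta$ is boundary-parallel in the component, or essential in it) that at least one component of $\D N(\gamma)$ must be essential and non-boundary-parallel in $\Sigma$, using in the annular sub-cases the hypothesis $2-2g-b<0$ to rule out the appearance of $S^2$-summands.
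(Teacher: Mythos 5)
Your approach is essentially sound, but it diverges from the paper's in one structurally important way. The paper first replaces $\gamma$ by a freely homotopic representative $\gamma'$ with minimal self-intersection (noting $\ell_\CT(\gamma')\le\ell_\CT(\gamma)$) and then sets $\eta=\D\Sigma(\gamma')$, the boundary of the smallest essential subsurface containing $\gamma'$. For a curve in minimal position, $\Sigma(\gamma')$ is just a regular neighborhood of $\gamma'$ together with the complementary discs and boundary annuli, so the non-emptiness of $\eta$ when $\gamma$ fails to fill is precisely the definition of the filled subsurface and requires no argument. You work with $\D N(\gamma)$ directly and discard the trivial and boundary-parallel components. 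This is more economical but shifts the burden onto the non-emptiness claim, which you correctly flag as the crux but leave as a sketch. It does go through: if $c\subset\D N(\gamma)$ is a boundary component of a complementary region $C$ and $c$ is null-homotopic (resp.\ boundary-parallel) in $\Sigma$, the bounding disc (resp.\ annulus) is disjoint from $\gamma$ and so lies in the component of $\Sigma\setminus\gamma$ deformation retracting onto $C$, which forces $C$ to be a disc (resp.\ a boundary annulus). Since the component containing the essential curve $\delta$ disjoint from $\gamma$ is neither, it has a boundary component in $\D N(\gamma)$ that is essential and non-boundary-parallel. You should fill this in rather than defer it.

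One concrete slip: your justification that $\eta$ is $\CT$-normal is wrong. You argue that the two boundary arcs of $N(\gamma)$ accompanying an arc of $\gamma\cap\Delta$ inherit its endpoint pattern, but this ignores the corner arcs that $\D N(\gamma)$ picks up at every double point of $\gamma$ lying in the interior of a $2$-simplex $\Delta$. If two arcs of $\gamma\cap\Delta$ cross inside $\Delta$, at least one of the four resulting corner arcs of $\D N(\gamma)\cap\Delta$ has both endpoints on the same side of $\Delta$, so $\D N(\gamma)$ is not in normal form. The claim is, however, harmlessly dispensable: $\ell_\CT$ is by definition the minimum over $\CT$-normal representatives, so $\ell_\CT(\eta)\le\vert\D N(\gamma)\cap\CT^{(1)}\vert\le 2\ell_\CT(\gamma)$ holds without asserting normality of $\D N(\gamma)$ itself (or, if you prefer, homotope via (R1), which only lowers the edge count). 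Just delete the false assertion and cite the minimum directly.
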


\begin{remark}
The number $2$ in the statement of Lemma \ref{lemblablabla} can be deleted.
\end{remark}

We get from the lemma that $\gamma$ is filling unless we find an admissible vector $\vec m\in\CA(\CT)\subset\BZ_{\ge 0}^{E(\CT)}$ with norm less than $2\ell_{\CT}(\gamma)$ and such that the submanifold $\gamma_{\vec m}$ satisfying \eqref{I am sick of this} for each $e\in E(\CT)$ is a multicurve with $\iota(\gamma_{\vec m},\gamma)=0$. There are polynomially many vectors satisfying the given bound, and it takes polynomial time to check if $\gamma_{\vec m}$ is a multicurve and if the intersection number vanishes, meaning that we can guarantee in polynomial time that $\gamma$ is filling if this is the case. Also, if $\gamma$ is not filling then we will find in polynomial time an essential simple curve $\eta_1$ disjoint of $\gamma$. Cut $\Sigma$ along $\eta_1$ and let $\Sigma_1$ be the connected component of $\Sigma\setminus\eta_1$ which contains $\gamma$. Run the same process again to detect if $\gamma$ fills $\Sigma_1$. If this is the case then we have $\Sigma(\gamma)=\Sigma_1$. If not, find an essential curve $\eta_2\subset\Sigma_1$ disjoint of $\gamma$, let $\Sigma_2$ be the connected component of $\Sigma_1\setminus\eta_2$ containing $\gamma$, and so on... this process runs as most $3g-3+b$ times and it is polynomial at all steps, meaning that in polynomial time we have found the essential subsurface $\Sigma(\gamma)$ of $\Sigma$ filled by $\gamma$. 

The running time of this process is polynomial in the complexity of $\gamma$ (but not of the triangulation).

\begin{proof}[Proof of Lemma \ref{lemblablabla}]
To begin with let $\gamma'$ be a curve which is freely homotopic to $\gamma$ and has minimal self-intersection. For example, we could obtain such a $\gamma'$ by avoiding monogons and bigons. This means in particular that there is such a $\gamma'$ with 
$$\ell_\CT(\gamma')\le\ell_\CT(\gamma).$$
Now, since $\gamma'$ has minimal self-intersection number, we get that the, up to isotopy, smallest essential subsurface $\Sigma(\gamma')$ containing $\gamma'$ can be obtained as a regular neighborhood of the union of $\gamma'$ with all components $\Sigma\setminus\gamma'$ which are disks or annuli containing a boundary component of $\Sigma$. Note that $\gamma'$, and hence $\gamma$ because they both represent the same homotopy class, fails to be filling if and only if $\eta=\D\Sigma(\gamma')$ is not empty. Note also that in the latter case $\eta$ is a $\CT$-normal simple multicurve with $\ell_\CT(\eta)\le 2\ell_{\CT}(\gamma)$ and satisfying 
$$\iota(\gamma,\eta)=\iota(\gamma',\eta)=0.$$
We are done.
\end{proof}

\begin{remark}
As we mentioned above, Kudlinska used Dehn-Thurston coordinates to parametrise simple curves. She also worked with the hyperbolic length instead of $\ell_\CT(\gamma)$. Although it is a bit more complicated to set up than what we just did, her approach has some advantages and in section \ref{sec: dehn-thurston} we will use both Dehn-Thurston coordinates and hyperbolic metric when we want a more effective bound for the running time of our algorithms. We stress than other than that we just followed Kudlinska's algorithm.
\end{remark}

Anyways, once we have gotten our personal Swiss proficient in these 10 routines we can describe the algorithms needed to prove the main results of this paper.

\section{The algorithms}\label{proof-thm-main}

In this section we prove the two results announced in the introduction.

\begin{proof}[Proof of Theorem \ref{thm-main}]
To prove the theorem we just have to describe the promised algorithm. 
We start by fixing an ordered pants decomposition $\vec P$, a simple multicurve $B$ which fills together with $\vec P$, and a triangulation $\CT$ of $\Sigma$ with respect to which $\vec P$ and $B$ admit simplicial embedded representatives meeting in only $\iota(P,B)$ points.

We will first prove the theorem under the following
\begin{quote}
{\bf additional assumptions}: the center of the mapping class group is trivial and the given elements $\gamma,\eta\in\pi_1(\Sigma,*)$ are primitive.
\end{quote}
We will discuss how to deal with the general case once we have proved the theorem under these additional assumptions.
\medskip

Anyways, we start the proof. Our inputs are primitive elements $\gamma,\eta\in\pi_1(\Sigma,*)$, written as words with respect to some finite generating set. For starters we can assume that these words are not only reduced but also cyclically reduced. Note also that we can use (R3) to check if either $\gamma$ or $\eta$ are simple and that we can use (R9) to detect if simple curves have the same type or not. We assume from now on that neither $\gamma$ nor $\eta$ is simple.

\subsection*{Step 1: Reduce to the filling case} 
We can first run (R10) to decide if the curve $\gamma$ is filling, detecting, if this is not the case, the smallest subsurface $\Sigma(\gamma)\subset\Sigma$ filled by $\gamma$. Run (R8) to get a triangulation of $\Sigma(\gamma)$ and proceed in the same way with $\Sigma(\eta)$ using then (R4) to detect if $\Sigma(\gamma)$ and $\Sigma(\eta)$ are homeomorphic. If this is not the case then \underline{$\gamma$ and $\eta$ are not of the same type}. 

Suppose that $\Sigma(\gamma)$ and $\Sigma(\eta)$ are homeomorphic and list all possible homotopy classes $[\D\Sigma(\gamma) \to \D\Sigma(\eta)]$ of homotopy equivalences. 
Using (R6), decide for each one of these homotopy classes if it is induced by some homeomorphism $\Sigma\to\Sigma$ which maps $\Sigma(\gamma)\to\Sigma(\eta)$. 
If there is no homotopy class of homotopy equivalences for which such a homeomorphism exists then \underline{$\gamma$ and $\eta$ are not of the same type}. 

Let $\CB$ be the list of all homotopy classes of homotopy equivalences $[\D\Sigma(\gamma) \to \D\Sigma(\eta)]$ realised by some homeomorphism of $\Sigma$ mapping $\Sigma(\gamma)$ to $\Sigma(\eta)$, and  for each $\pi\in\CB$ run (R7) to get a proper homotopy equivalence 
$$\phi_\pi:\Sigma(\gamma)\to\Sigma(\eta)$$
inducing $\pi$. Then we are facing the following alternative:
\begin{itemize}
\item If there are $\pi\in\CB$ and a pure mapping class $\psi\in P\Map(\Sigma(\eta))$ such that $\psi(\phi_\pi(\gamma))$ is freely homotopic to $\eta$ then \underline{$\gamma$ and $\eta$ are of the same type}.
\item Otherwise they are \underline{they don't have the same type}
\end{itemize}
We have to figure out, in polynomial time, if such $\pi$ and $\psi$ exist. Since the cardinality of $\CB$ is bounded in terms of $g$ and $b$, and since we can work each $\pi$ individually, it suffices to be able to check in polynomial time if for fixed $\pi\in\CB$ the pure mapping class $\psi$ exists or not. A priori this sounds very similar to the initial problem, replacing only $\gamma$ by $\phi_\pi(\gamma)$ and the mapping class group by the pure mapping class group. It is indeed very similar. However we are now in a situation where the given curves fill the surface. This means that, all things considered, we are reduced to having to give a polynomial time algorithm solving the following problem:
\begin{quote}
{\em Given two primitive filling curves $\gamma$ and $\eta$ in $\Sigma$, determine if there is $\phi\in P\Map(\Sigma)$ with $\phi(\gamma)=\eta$.}
\end{quote}
This is what we will do from now on. As we already mentioned in the introduction, the basic strategy is to produce in polynomial time a collection $\CC\subset P\Map(\Sigma)$ of mapping classes, the candidates, with the following properties:
\begin{itemize}
\item[(C1)] If $\phi\in P\Map(\Sigma)$ is such that $\phi(\gamma)=\eta$, then $\phi\in\CC$.
\item[(C2)] There are polynomially many elements in $\CC$ and it takes polynomial time to decide if $\phi(\gamma)=\eta$ for each one of them individually.
\end{itemize}
Once we have this list then we just have to check for each $\phi\in\CC$ if $\phi(\gamma)=\eta$. If we find one then \underline{$\gamma$ and $\eta$ are of the same type}, otherwise \underline{they aren't}.
\medskip

To produce the desired list $\CC$ of candidates we start by making a preliminary list, this time consisting of pants decompositions:

\subsection*{Step 2: Get the set $\CP$ of plausible images of $\vec P$}
Suppose that $\vec\alpha$ is an ordered pants decomposition satisfying $\vec\alpha=\phi(\vec{P})$ for some $\phi\in P\Map(\Sigma)$ with $\phi(\gamma)=\eta$. Applying Lemma \ref{mainlem} to the filling curve $\eta$ and to $\vec\alpha$ we get
\begin{equation}\label{eq-11}
\ell_\CT(\vec\alpha)
\le \ell_{\CT}(\eta)\cdot\iota(\vec\alpha,\eta)=\ell_{\CT}(\eta)\cdot\iota(\phi(\vec{P}),\phi(\gamma))=\ell_{\CT}(\eta)\cdot\iota(\vec{P},\gamma).
\end{equation}

\begin{remark}
For later use note that, replacing $\vec{P}$ by $B$ in this computation we also get
\begin{equation}\label{eq-10}
\ell_\CT(B')\le\ell_{\CT}(\eta)\cdot\iota(B,\gamma)
\end{equation}
for any multicurve $B'$ which could arise as $B'=\phi(B)$ for some $\phi\in P\Map(\Sigma)$ with $\phi(\gamma)=\eta$.
\end{remark}

Now, for every admissible $\vec m\in\CA(\CT)\subset\BZ_{\ge 0}^{E(\CT)}$ with 
$$\Vert\vec m\Vert\stackrel{\text{def}}=\sum\vert m_e\vert\le\ell_{\CT}(\eta)\cdot\iota(\vec{P},\gamma)$$
we consider the submanifold $\gamma_{\vec m}$ satisfying \eqref{I am sick of this} for each $e\in E(\CT)$ and check if it is a pants decomposition. To do that we have to check if its components are essential (R3), if no two distinct components are freely homotopic to  each other (R4), and if the connected components of the complement are pairs of pants (R6). If $\gamma_{\vec m}$ is not a pants decomposition throw it away while proclaiming it to be dust of the earth. Otherwise, that is if $\gamma_{\vec m}$ is a pants decomposition, take the $2^{3g-3+b}\cdot(3g-3+b)!$ ordered oriented multicurves obtained by considering all possible orderings and orientations of the components of $\gamma_{\vec m}$. For everyone of those ordered oriented multicurves we check if they have the same type as $\vec P$. Those who don't are dumped into the dustbin of history, while we preciously keep the others in the set
$$\CP=\left\{
\vec\alpha\middle\vert\begin{array}{l}
\text{ ordered oriented multicurve of type }\vec P\text{ whose underlying multicurve }\alpha\\\text{ arises as }\gamma_{\vec m}\text{ for some admissible }\vec m\in\CA(\CT)\text{ with }\Vert\vec m\Vert\le\ell_{\CT}(\eta)\cdot\iota(\vec{P},\gamma)
\end{array}\right\}.$$
Note that, by the preceding discussion, it takes polynomial time to produce the set $\CP$ and that \eqref{eq-11} holds for every $\vec\alpha\in\CP$.

So far all we have is a collection of pants decompositions. What we want is a collection of mapping classes of homotopy equivalences. 

\subsection*{Step 3: Get a preliminary list $\CC_0$ of maps}
Take now $\vec\alpha\in\CP$ and run (R8) to get and adapted triangulation. Then we run (R7) on each connected component of $\Sigma\setminus\vec\alpha$ to get a proper homotopy equivalence 
$$\phi_{\vec\alpha}:\Sigma\to\Sigma$$
mapping each boundary component to itself and $\vec P$ to $\vec\alpha$. Let
$$\CC_0=\{\phi_{\vec\alpha}\text{ for }\vec\alpha\in\CP\}$$
be the collection of so-obtained maps. 

As remarked after describing (R7) we can bound the Lipschitz constant of $\phi_{\vec\alpha}\in\CC_0$ polynomially in terms of $\ell_\CT(\vec\alpha)$, meaning in particular that
$$\ell_\CT(\phi_{\vec\alpha}(\beta))\le\Pol(\ell_{\CT}(\alpha))\cdot \ell_\CT(\beta)$$
for every curve $\beta\subset\Sigma$. Taking \eqref{eq-11} into consideration, and reminding the reader that the concrete polynomial $\Pol$ might change from line to line, we get
\begin{equation}\label{eq-12}
\ell_\CT(\phi_{\vec\alpha}(\beta))\le\Pol\left(\ell_{\CT}(\eta),\iota(\vec{P},\gamma),\ell_\CT(\beta)\right)
\end{equation}
for every $\phi_{\vec\alpha}\in\CC_0$ and every curve $\beta$. 

For the convenience of the reader we summarise what we have achieved after running Step 2 and Step 3:
\begin{quote}
{\bf Outcome so far:} {\em We have produced in polynomial time a collection $\CC_0$ of polynomially many homotopy equivalences $\phi_{\vec\alpha}:\Sigma\to\Sigma$ with the following properties:
\begin{itemize}
\item If $\phi\in P\Map(\Sigma)$ is such that $\phi(\gamma)=\eta$, then there is $\phi_{\vec\alpha}\in\CC_0$ with $\phi(\vec P)=\phi_{\vec\alpha}(\vec P)$.
\item The inequality \eqref{eq-12} holds for every $\phi_{\vec\alpha}\in\CC_0$ and every curve $\beta\subset\Sigma$. 
\end{itemize}}
\end{quote}
As we already mentioned earlier, we get from the Dehn-Baer-Nielsen theorem that every proper homotopy equivalence $\Sigma\to\Sigma$ represents a mapping class. We can thus think of $\CC_0\subset P\Map(\Sigma)$. The collection $\CC_0$ is going to be contained in our definitive list of candidates.

\subsection*{Step 4: Get our list $\CC\subset P\Map(\Sigma)$ of candidates} 

Suppose that we have $\phi\in P\Map(\Sigma)$ with $\phi(\gamma)=\eta$ and let $\phi_0\in\CC_0$ be the element satisfying 
$$\phi(\vec P)=\phi_0(\vec P).$$
This means that $\phi$ and $\phi_0$ differ by an element of the stabiliser of the ordered oriented pants decomposition $\vec P$, that is by a multi-twist along $\vec P$. In other words there is an $\vec n\in\BZ^{3g-3+b}$ with 
\begin{equation}\label{eating curry}
\phi=\phi_0\circ T_{\vec P}^{\vec n}
\end{equation}
where $T_{\vec P}^{\vec n}=T_{P_1}^{n_1}\circ\dots\circ T_{P_{3g-3+b}}^{n_{3g-3+b}}$ is the multi-twist along $\vec P$ which twists $n_i$ times around the component $P_i$. Note that we can rewrite $\phi$ also as 
$$\phi=\phi_0\circ T_{\vec P}^{\vec n}=\phi_0\circ T_{\vec P}^{\vec n}\circ\phi^{-1}_0\circ\phi_0=T_{\phi_0(\vec P)}^{\vec n}\circ\phi_0$$
where $T_{\phi_0(\vec P)}^{\vec n}$ is the multi-twist that which twists $n_i$ times around the $i$-th component of $\phi_0(\vec P)$. Since $\phi(\gamma)=\eta$ we get thus that 
$$(T_{\phi_0(\vec P)}^{\vec n}\circ\phi_{0})(\gamma)=\eta.$$
In particular we get from \eqref{eq-10} that 
$$\ell_\CT((T_{\phi_0(\vec P)}^{\vec n}\circ\phi_{0})(B))\le\ell_{\CT}(\eta)\cdot\iota(B,\gamma),$$
and thus, using that $B,P\subset\CT^{(1)}$, that
\begin{equation}\label{eq-13}
\iota(B+P,T_{\phi_0(\vec P)}^{\vec n}(\phi_{0}(B)))\le\ell_{\CT}(\eta)\cdot\iota(B,\gamma).
\end{equation}
We are now ready to make use of the following lemma due to Ivanov \cite[Lemma 4.2]{Ivanov}

\begin{lemma}\label{Ivanov}
Let $\alpha_1,\dots,\alpha_s$ be disjoint simple curves in $\Sigma$ and let $T=T_{\alpha_1}^{n_1}\cdot\ldots\cdot T_{\alpha_s}^{n_s}$ be the multitwist which twists $n_i$ times around $\alpha_i$. Then we have 
$$\iota(T(\delta),\delta')\ge \sum_i(\vert n_i\vert-2)\cdot\iota(\delta,\alpha_i)\cdot\iota(\alpha_i,\delta')-\iota(\delta,\delta')$$
for all simple multicurves $\delta,\delta'\in\Sigma$.
\end{lemma}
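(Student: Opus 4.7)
I would follow the strategy of Ivanov's original argument, whose guiding idea is that whenever the $\alpha_i$ are twisted many times, each essential arc of $\delta\cap A_i$ (in an annular neighborhood $A_i$ of $\alpha_i$) becomes a long spiral that is forced to cross each arc of $\delta'\cap A_i$ roughly $|n_i|$ times. The bookkeeping $-2$ in $(|n_i|-2)$ and the global $-\iota(\delta,\delta')$ come from bigon cancellations once we pass to minimal position.

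First I would choose pairwise disjoint annular neighborhoods $A_1,\dots,A_s$ of the $\alpha_i$, which is possible because these curves are disjoint. Then I would put $\delta$ and $\delta'$ in simultaneous minimal position with respect to the $\alpha_i$ (a standard fact for disjoint simple multicurves) and, after isotopy, arrange that $\delta\cap A_i$ consists of exactly $k_i:=\iota(\delta,\alpha_i)$ essential arcs crossing $A_i$, that $\delta'\cap A_i$ consists of exactly $k_i':=\iota(\delta',\alpha_i)$ essential arcs, and that $\delta$ and $\delta'$ do not meet inside any $A_i$. Next I would realize the multi-twist $T$ by a homeomorphism supported in $\bigcup_i A_i$, so that $T(\delta)$ coincides with $\delta$ outside the annuli and, inside $A_i$, each of the $k_i$ arcs of $\delta$ is replaced by an arc winding $n_i$ times around the core.

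The key local estimate is elementary: inside $A_i$, a wrapping arc and a transverse essential arc must cross in at least $|n_i|-1$ points, so these particular representatives of $T(\delta)$ and $\delta'$ meet at least $(|n_i|-1)\,k_i k_i'$ times inside $A_i$. Outside the annuli they meet exactly as $\delta$ and $\delta'$ do, and by construction none of these outside intersections lie in any $A_j$.

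The main obstacle is that $\iota(T\delta,\delta')$ is the \emph{minimum} over all representatives, so the above count is a priori only an upper bound for $\iota(T\delta,\delta')$, not a lower one. The technical heart is therefore a bigon-elimination analysis: any innermost bigon between these representatives of $T(\delta)$ and $\delta'$ that removes an intersection inside $A_i$ must enter $A_i$ through $\partial A_i$, and a direct count shows that at each of the two ends of a wrapping strand at most one annular crossing can be cancelled this way, accounting for the loss of $2 k_i k_i'$ intersections inside $A_i$ (hence the transition from $|n_i|-1$ to $|n_i|-2$); bigons disjoint from $\bigcup A_i$ would descend to bigons between $\delta$ and $\delta'$, and the aggregate contribution of such cancellations is bounded by $\iota(\delta,\delta')$ since $\delta$ and $\delta'$ are already in minimal position. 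Adding the annular lower bounds $(|n_i|-2)\,\iota(\delta,\alpha_i)\,\iota(\alpha_i,\delta')$ and subtracting the global correction $\iota(\delta,\delta')$ yields the inequality in the statement.
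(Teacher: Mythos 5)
The paper does not actually prove this lemma: it cites it verbatim as Lemma~4.2 of Ivanov, so there is no in-paper proof to compare against. What can be assessed is whether your sketch is a correct reconstruction of Ivanov's argument, and the overall plan is the right one: isolate the twisting inside disjoint annular neighborhoods of the $\alpha_i$, count crossings of wrapping strands against transversal strands, and then bound what bigon removal can destroy. That is indeed the shape of Ivanov's proof.

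The bookkeeping, however, does not produce the stated bound. You first assert a local count of at least $(|n_i|-1)\,k_i k_i'$ intersections inside $A_i$ (which is the correct sharp count for an arc of wrapping number $n_i$ against a transversal arc in an annulus), and then assert a loss of $2k_ik_i'$ to annular bigons, describing this as ``the transition from $|n_i|-1$ to $|n_i|-2$.'' But $(|n_i|-1)k_ik_i' - 2k_ik_i' = (|n_i|-3)k_ik_i'$, strictly weaker than what is claimed; to land on $(|n_i|-2)$ one needs either a local count of $|n_i|\cdot k_ik_i'$ (which the representative you build does not in general achieve) or an annular loss of only $k_ik_i'$, and you would have to justify whichever you choose. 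Separately, the claim that bigons disjoint from $\bigcup A_i$ ``descend to bigons between $\delta$ and $\delta'$, and the aggregate contribution of such cancellations is bounded by $\iota(\delta,\delta')$'' is misplaced: away from the annuli $T$ is the identity, and since you took $\delta,\delta'$ in minimal position there are no such bigons, so this accounting explains nothing. The correction term $-\iota(\delta,\delta')$ has to be charged to something else --- for instance bigons with one vertex in an annulus and one outside, or arcs of $\delta'$ meeting $\partial A_i$ twice on the same side --- and the sketch does not identify what. So the strategy is correct in outline, but the two central estimates are not actually established, and the arithmetic as written does not close to the inequality in the statement.
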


Applying this lemma to $T=T_{\phi_0(\vec P)}^{\vec n}$, to $\delta=\phi_0(B)$, and individually to $\delta'=P$ and $\delta'=B$ we get:
$$\iota(T_{\phi_0(\vec P)}^{\vec n}(\phi_0(B)),P+B)
\ge \sum_i(\vert n_i\vert-2)\cdot\iota(\phi_0(B),\phi_0(P_i))\cdot\iota(\phi_0(P_i),P+B)-\iota(\phi_0(B),P+B).$$
Taking into account that 
$$\iota(\phi_0(B),\phi_0(P_i))=\iota(B,P_i)\ge 1\text{ and }\iota(\phi_0(P_i),P+B)\ge 1$$
we get the rather coarse estimate 
$$\iota(T_{\phi_0(\vec P)}^{\vec n}(\phi_0(B)),P+B)
\ge \sum_i(\vert n_i\vert-2)-\iota(\phi_0(B),P+B).$$
Using \eqref{eq-12}, \eqref{eq-13}, and some algebra we get:
\begin{equation}\label{eq-14}
\sum_i\vert n_i\vert\le \Pol(\ell_{\CT}(\eta),\ell_{\CT}(\gamma)).
\end{equation}
We thus have established that any $\phi\in P\Map(\Sigma)$ with $\phi(\gamma)=\eta$ belongs to the set
$$\CC=\{\phi_0\circ T_{\vec P}^{\vec n}\text{ where }\phi_0\in\CC_0\text{ and }\vec n\in\BZ^{3g-3+b}\text{satisfies \eqref{eq-14}}\}.$$
This means that the set $\CC$ satisfies property (C1). It also satisfies property (C2). With this we are done with the construction of our algorithm and thus with the proof of Theorem \ref{thm-main}... under the additional assumptions that the center of the mapping class group $\Map(\Sigma)$ is trivial and that the elements $\gamma$ and $\eta$ are primitive. We deal with these two issues individually.

\subsubsection*{What to do if the center of $\Map(\Sigma)$ is not trivial?} First one should make clear where did we use this assumption. Well, if the center is not trivial then we do not get \eqref{eating curry}, but rather only that 
$$\phi=\phi_0\circ T^{\vec n}_{\vec P}\circ\tau$$
where $\tau\in C(\Map(\Sigma))$. This is not tragic because, as we mentioned earlier, there is at most a single non-trivial element in $C(\Map(\Sigma))$, namely the hyper-elliptic involution $\tau_0$. It follows that we just have to check if there is some $\vec n\in\BZ^N$ with 
$$\text{either }(\phi_0\circ T^{\vec n}_{\vec P})(\gamma)=\eta\text{ or }(\phi_0\circ T^{\vec n}_{\vec P})(\gamma^{\tau_0})=\eta$$
where $\gamma^{\tau_0}=\tau_0(\gamma)$. This means that we just have to run the algorithm above twice, once for $\gamma$ and once of $\gamma^{\tau_0}$. It is probably unnecessarily to say so, but twice a polynomial is still a polynomial. We have dropped the assumption that $C(\Map(\Sigma))$ is trivial.

\subsubsection*{What to do with elements which are not primitive?}
Let us drop now the assumption that the elements $\gamma$ and $\eta$ are primitive. In \cite{Despre-Lazarus}, Despr\'e and Lazarus show that in surface groups the primitive positive root of an element can be computed in polynomial time. This means that in polynomial time we get $\bar\gamma,\bar\eta\in\pi_1(\Sigma,*)$ primitive and $r,s\ge 1$ with 
$$\gamma=\bar\gamma^r\text{ and }\eta=\bar\eta^s.$$
Now, since (non-trivial) abelian subgroups of surface groups are contained in unique maximal cyclic subgroups, we get that $\bar\gamma,\bar\eta,r$ and $s$ are unique. It follows that $\gamma$ and $\eta$ are of the same type if and only if the primitive elements $\bar\gamma$ and $\bar\eta$ are of the same type and $r=s$. Since we can check that in polynomial time, we are done.
\end{proof}

Having proved Theorem \ref{thm-main} we come now to the proof of Theorem \ref{thm-main1}. There is not much to say:

\begin{proof}[Proof of Theorem \ref{thm-main1}]
Suppose that we are given two elements $\gamma,\eta\in\pi_1(\Sigma,*)$ in the fundamental group of a closed connected surface, and let $\bar\gamma$ and $\bar\eta$ be their conjugacy classes. The elements $\gamma$ and $\eta$ differ by an automorphism $\phi\in\Aut(\pi_1(\Sigma,*))$ if and only if the classes $\bar\gamma$ and $\bar\eta$ differ by an exterior automorphism $\bar\phi\in\Out(\pi_1(\Sigma,*))$. Now, from the Baer-Dehn-Nielsen theorem \cite{MR2850125}, we get an identification $\Out(\pi_1(\Sigma,*))\simeq\Map(\Sigma)$ between the group of exterior automorphisms and the mapping class group. This identification is compatible with the identification between conjugacy classes in $\pi_1(\Sigma,*)$ and free homotopy classes of curves in $\Sigma$. It follows that $\gamma$ and $\eta$ differ by an automorphism if and only if the two associated (free homotopy classes of) curves $\bar\gamma$ and $\bar\eta$ are of the same type. We can decide that in polynomial time grace to the algorithm used to prove Theorem \ref{thm-main}. Moreover, if $\bar\gamma$ and $\bar\eta$ differ by a mapping class then we get some $\bar\phi\in\Map(\Sigma)=\Out(\pi_1(\Sigma,*))$ with $\bar\phi(\bar\gamma)=\bar\eta$. Referring to a representative of $\bar\phi$ by the same symbol, all it remains is to do is to find $g\in\pi_1(\Sigma,*)$ with $\bar\phi(\gamma)=g\eta g^{-1}$. Since, by the proof of Theorem \ref{thm-main}, the Lipschitz constant of $\bar\phi$ is polynomially bounded by the complexity of $\gamma$ and $\eta$ we get that what we have to do is to find in polynomial time a conjugating element. This well-known to be possible because $\pi_1(\Sigma,*)$ is hyperbolic \cite{Holt}. We are done.
\end{proof}

\section{A slightly more efficient approach}\label{sec: dehn-thurston}

Although clearly possible, it would be painful and frustrating to try to give actual estimates for the running time of the algorithm used to prove Theorem \ref{thm-main}: evidently it would be painful, and it would be frustrating because one would get truly horrible bounds. This is why we sketch now a variation of the same algorithm, using Dehn-Thurston coordinates instead of triangulations, getting an not-that-bad estimate for the running time. For the sake of concreteness we will focus on the most interesting case:

\begin{quote}
{\bf Assumption.} {\em Let $\gamma$ and $\eta$ be primitive filling curves in $\Sigma$.}
\end{quote}
 We refer to \cite{Thesis} for a detailed and improved version of the results in this chapter, with explicit values for the involved constants.
 
Anyways, as all along we are aiming to determine if there is $\phi\in P\Map(\Sigma)$ with $\phi(\gamma)=\eta$. As we did above, we fix an ordered pants decomposition $\vec P = (P_1,\dots,P_{3g-3+b})$, and a simple multicurve $B$ with the property that $\iota(B,P_i)=2$ for all $i=1,\dots,3g-3+b$. We also fix a hyperbolic metric on $\Sigma$ and let 
$$L=\max\{\ell_\Sigma(\gamma),\ell_\Sigma(\eta)\}.$$
In the sequel we will write $\const$ for any constant which only depends on $P$, $B$ and the metric, but it might change from line to line.

Anyways, we denote by 
$$\Pi=\Pi_{\vec P,B}:\calD\to\CM\CL_\BZ$$
the Dehn-Thurston parametrisation \cite{Penner-Harer} associated to $\vec P$ and $B$. Here $\CM\CL_\BZ(\Sigma)$ is the set of all integrally weighted simple multicurves and $\calD$ is the set of all pairs $(\vec m,\vec t)\in\BZ_{\ge 0}^{3g-3+b}\times\BZ^{3g-3+b}$ such that $t_i\ge 0$ whenever $m_i=0$ and that $m_{i_1}+m_{i_2}+m_{i_3}$ is even whenever the component $P_{i_1},P_{i_2}$ and $P_{i_3}$ bound a pair of pants. If $\gamma=\Pi(\vec m,\vec t)$ then we let
$$\ell_{\vec P,B}(\gamma) := \sum_{i=1}^{N} m_i + \sum_{i=1}^{N} |t_i|$$
be the \textit{combinatorial length} of a multicurve $\gamma$ with respect to $\vec P$ and $B$.

It follows from general principles (for example the Milnor-\v Svarc lemma), that the combinatorial length of a curve, its hyperbolic length, as well as its intersection number with $\vec P+B$ are all comparable to each other. We thus have:

\begin{lemma}\label{lot of noise1}
With notation as above we have
$$\frac{1}{\const} \cdot\ell_{\vec P,B}(\gamma) \le \iota(\gamma,\vec P+B) \le\const\cdot\ell_{\vec P,B}(\gamma)$$
for every simple multicurve $\gamma$.\qed
\end{lemma}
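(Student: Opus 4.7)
The plan is to prove both inequalities simultaneously by showing that each of $\ell_{\vec P, B}(\gamma)$ and $\iota(\gamma, \vec P + B)$ is comparable (up to a multiplicative constant depending only on $\vec P$, $B$ and the fixed hyperbolic metric) to the hyperbolic length $\ell_\Sigma(\gamma)$ of the geodesic representative of $\gamma$; the lemma then follows by transitivity. As the paper hints via the Milnor--\v Svarc lemma, both "discrete" quantities should be good proxies for hyperbolic length.

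First I would establish $\frac{1}{\const}\,\ell_\Sigma(\gamma) \le \iota(\gamma, \vec P + B) \le \const \cdot \ell_\Sigma(\gamma)$ for simple multicurves $\gamma$. Since $\vec P + B$ fills $\Sigma$, the complement $\Sigma \setminus (\vec P + B)$ consists of finitely many topological regions of uniformly bounded hyperbolic diameter. The upper bound on hyperbolic length follows because the geodesic representative of $\gamma$ is cut by $\vec P + B$ into arcs, each contained in a single complementary region and hence of bounded hyperbolic length. For the reverse bound, since $\gamma$ is simple and the graph $\vec P + B$ is fixed, each transverse crossing consumes at least a uniform positive amount of hyperbolic length (the width of a tubular neighbourhood of $\vec P + B$ suffices).

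Next I would show the analogous comparison $\frac{1}{\const}\,\ell_\Sigma(\gamma) \le \ell_{\vec P, B}(\gamma) \le \const \cdot \ell_\Sigma(\gamma)$ using the explicit Dehn-Thurston construction. The multicurve $\Pi(\vec m, \vec t)$ can be realised (up to a uniformly bounded homotopy) as a concatenation of elementary pieces: one arc inside a pair of pants for each intersection with a pants curve (there are $\sum_i m_i$ of them) plus one "twist piece" for each unit of $|t_i|$ in an annular collar of $P_i$ (there are $\sum_i |t_i|$ of them). In the fixed hyperbolic metric each elementary piece has length bounded above and below by constants depending only on $\vec P$, $B$ and the metric. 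Summing yields that $\ell_\Sigma(\gamma)$ is comparable to $\sum_i m_i + \sum_i |t_i| = \ell_{\vec P, B}(\gamma)$.

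The main obstacle will be the second comparison. One must put $\gamma$ into Dehn-Thurston normal form in the sense of Penner-Harer and verify that every unit of combinatorial length truly corresponds to a subarc of uniform hyperbolic size, while checking that the twists fit together coherently across the annular collars and the pairs of pants. The degenerate cases $m_i = 0$, where $t_i \ge 0$ counts parallel copies of $P_i$ instead of genuine twists, still satisfy the bound since each added parallel copy contributes the fixed length $\ell_\Sigma(P_i)$. Once this bookkeeping is carried out, the lemma follows by composing the two quasi-isometries.
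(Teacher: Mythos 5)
The paper gives no actual proof of this lemma: it simply asserts the comparison ``by general principles'' (e.g.\ Milnor--\v Svarc) and puts a \qed on the statement. Your proposal follows exactly the route the paper hints at, namely comparing both $\iota(\gamma,\vec P+B)$ and $\ell_{\vec P,B}(\gamma)$ to the hyperbolic length $\ell_\Sigma(\gamma)$ and composing, and in broad outline that route works. However, two of the steps as you state them do not hold up.

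The lesser issue is the claim that ``each transverse crossing consumes at least a uniform positive amount of hyperbolic length.'' This is false for the union $\vec P + B$: near a point where a component of $\vec P$ meets $B$, the geodesic $\gamma$ may cross both curves within an arbitrarily short subarc. The standard fix is to apply the collar lemma to one component $c$ of $\vec P \cup B$ at a time, giving $\iota(\gamma,c) \le \ell_\Sigma(\gamma)/(2w(c))$, and then sum over the finitely many components; alternatively, observe that a single traversal of the tubular neighbourhood produces at most two crossings. Either way the conclusion $\iota(\gamma,\vec P+B) \le \const\cdot\ell_\Sigma(\gamma)$ survives, but the phrasing needs repair.

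The more serious gap is in the direction $\ell_{\vec P,B}(\gamma) \le \const\cdot\ell_\Sigma(\gamma)$. You argue that each elementary piece of the Dehn--Thurston normal form has length bounded above \emph{and below}, and that summing gives comparability. But a lower bound on the lengths of the pieces of some chosen representative only bounds the length of \emph{that representative} from below; it says nothing about $\ell_\Sigma(\gamma)$, which is the infimum over all representatives and sits on the wrong side of the inequality. (The ``bounded above'' half is fine: it gives a representative of length $\le \const\cdot\ell_{\vec P,B}(\gamma)$, hence $\ell_\Sigma(\gamma) \le \const\cdot\ell_{\vec P,B}(\gamma)$.) To get the missing direction you should instead go through intersection numbers: from the Penner--Harer normal form one reads off $\sum_i m_i = \iota(\gamma,P)$ and a bound of the shape $\sum_i |t_i| \le \const\cdot\bigl(\iota(\gamma,B) + \iota(\gamma,P)\bigr)$, and then the already-established inequality $\iota(\gamma,\vec P+B) \le \const\cdot\ell_\Sigma(\gamma)$ finishes the job. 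This in fact short-circuits the detour through hyperbolic length for the combinatorial side entirely, and is the bookkeeping that needs to be done in place of the ``pieces bounded below'' step.
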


We also have the following version of Lemma \ref{mainlem}.

\begin{lemma}\label{lot of noise2}
Let $\gamma$ be a filling curve and $\alpha$ be a set of curves in $\Sigma$. Then, we have
$$i(\alpha, \beta) \le 
\const \cdot \ell_\Sigma(\alpha) \cdot \ell_\Sigma(\gamma) \cdot (\iota(\beta,\beta)+1)\cdot \iota(\beta,\gamma)$$
for any multicurve $\beta$ in $\Sigma$.
If moreover $\Sigma$ is closed then the statement remains true even after deleting the factor $1+\iota(\beta,\beta)$.\qed
\end{lemma}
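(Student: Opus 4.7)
The plan is to mimic the proof of Lemma \ref{mainlem}, with geodesic representatives on the fixed hyperbolic surface $\Sigma$ playing the role that the $1$-skeleton of the triangulation $\CT$ played there. The first step is to replace $\gamma,\alpha,\beta$ by their (closed) geodesic representatives, so that pairwise intersections realise the geometric intersection numbers and the self-intersection of $\beta$ equals $\iota(\beta,\beta)$.

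The central intermediate claim I would prove is a hyperbolic analog of Lemma \ref{mainlem}: for any multicurve $\beta$,
\[
\ell_\Sigma(\beta)\le \const\cdot\ell_\Sigma(\gamma)\cdot\iota(\beta,\gamma)\cdot(1+\iota(\beta,\beta)),
\]
with the $(1+\iota(\beta,\beta))$ factor being unnecessary when $\Sigma$ is closed. The argument parallels the triangulation case: since $\gamma$ fills, every component $\Delta$ of $\Sigma\setminus\gamma$ is either a disk or an annulus containing a boundary component. A collar-lemma style argument shows that the diameter of each such $\Delta$ is bounded by $\const\cdot\ell_\Sigma(\gamma)$, with the constant depending only on the fixed metric. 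Cutting $\beta$ along $\gamma$ produces $\iota(\beta,\gamma)$ arcs, each lying in some component $\Delta$. If $\Delta$ is a disk, the arc can be straightened into $\partial\Delta$, contributing at most $\const\cdot\ell_\Sigma(\gamma)$ to $\ell_\Sigma(\beta)$. If $\Delta$ is an annulus, the arc may wrap around the core, but each wrap forces the self-intersection number of $\beta$ to grow, yielding a length bound of $\const\cdot\ell_\Sigma(\gamma)\cdot(1+\iota(\beta,\beta))$ per arc. Summing over the $\iota(\beta,\gamma)$ arcs gives the claim.

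The second step is to combine this with the classical hyperbolic bound
\[
\iota(\alpha,\beta)\le \const\cdot\ell_\Sigma(\alpha)\cdot\ell_\Sigma(\beta),
\]
valid for geodesic representatives on a fixed hyperbolic surface, where the constant depends only on the systole of $\Sigma$ (this follows from a standard area/collar argument: each transverse intersection of two geodesics consumes a definite amount of $\ell_\Sigma(\alpha)\cdot\ell_\Sigma(\beta)$). Plugging the bound on $\ell_\Sigma(\beta)$ from the previous paragraph into this inequality yields exactly
\[
\iota(\alpha,\beta)\le \const\cdot\ell_\Sigma(\alpha)\cdot\ell_\Sigma(\gamma)\cdot(1+\iota(\beta,\beta))\cdot\iota(\beta,\gamma),
\]
as required.

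The main obstacle is checking that every ``$\const$'' genuinely depends only on the ambient hyperbolic metric (together with $\vec P$ and $B$ if we want to compare with Dehn--Thurston data later), and not on the complexities of $\alpha$, $\beta$, or $\gamma$. This reduces to two uniform facts: a thick-thin decomposition estimate bounding the diameter of the complementary components of any filling geodesic in terms of its own length, and the standard systole-dependent bound on intersections of geodesic arcs. Both are routine once the geometry of $\Sigma$ is fixed, but they are where the work really lies.
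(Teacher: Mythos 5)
Your proposal is correct and supplies the proof that the paper leaves implicit: Lemma \ref{lot of noise2} is stated with a \verb|\qed| and described only as ``the following version of Lemma \ref{mainlem}'', so your two-step plan (hyperbolic analogue of Lemma \ref{mainlem} for $\ell_\Sigma(\beta)$, then the systole-dependent bound $\iota(\alpha,\beta)\le\const\cdot\ell_\Sigma(\alpha)\cdot\ell_\Sigma(\beta)$) is exactly the intended argument and the factors combine as required.

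Two small observations. First, there is a shortcut the paper itself hints at with its invocation of ``general principles (for example the Milnor--\v Svarc lemma)'': since $\ell_\CT(\cdot)$ and $\ell_\Sigma(\cdot)$ are comparable up to multiplicative constants depending only on $\CT$ and the metric, one can feed Lemma \ref{mainlem} directly into $\iota(\alpha,\beta)\le\const\cdot\ell_\Sigma(\alpha)\cdot\ell_\Sigma(\beta)$ without re-running the cut-and-homotope argument in the hyperbolic setting; your re-derivation is equivalent but does more work. Second, for the disk components the ``collar-lemma style'' diameter bound is more than you need and a bit of a red herring: a geodesic subarc $I_i$ of $\beta$ lying in a complementary disk $\Delta$ lifts to a geodesic segment in $\BH^2$ whose length is the distance between its endpoints on $\partial\tilde\Delta$, hence at most half the perimeter of $\Delta$, i.e.\ at most $\ell_\Sigma(\gamma)$, with no reference to thick--thin geometry. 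The genuinely delicate point is, as you flag, the annular components when $\D\Sigma\neq\emptyset$: there one really does need that each extra wind around the core costs a self-intersection of $\beta$ and that the width of the annulus is controlled by $\ell_\Sigma(\gamma)$ plus a metric constant, which is where the $(1+\iota(\beta,\beta))$ factor and the dependence of $\const$ on the fixed hyperbolic structure come from.
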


Suppose now that $\phi\in P\Map(\Sigma)$ is such that $\phi(\gamma)=\eta$. Recalling that the value of the constant may change from line to line, we get from these two lemmas that
\begin{align*}
\ell_{\vec P,B}(\phi(\vec P))
&\le\const\cdot\iota(\phi(\vec P),\vec P+B)\\
&\le \const \cdot \ell_\Sigma(\vec P+B) \cdot \ell_\Sigma(\eta) \cdot \iota(\phi(\vec P),\eta)\\
&=\const \cdot \ell_\Sigma(\eta) \cdot \iota(\phi(\vec P),\phi(\gamma))\\
&\le \const \cdot L \cdot \iota(\vec P,\gamma)\\
&\le \const\cdot L^2.
\end{align*}
This means that
$$\phi(\vec{P})\in\CP\stackrel{\text{def}}=\left\{\vec\alpha\ 
\middle\vert\begin{array}{l}
\text{ ordered oriented multicurve of type } \vec P\text{ whose underlying multicurve}\\\text{ arises as }\Pi(\vec{m},\vec{t})
\text{ for some } (\vec{m},\vec{t})\in\calD \text{ with } \Vert (\vec{m},\vec{t}) \Vert \le\const\cdot L^2
\end{array}\right\}.$$
How long does it take to produce $\CP$? Well, for any vector $(\vec m,\vec t)\in\calD$ with $\Vert(\vec m,\vec t)\Vert\le\const \cdot {L^2}$ it takes $O(L^2)$ time to check if the associated multicurve $\alpha=\Pi_{\vec P,B}(\vec m,\vec t)$ is a pants decomposition and, if this is the case, to draw a embedded copy of the dual graph. Once we have the dual graph we can determine, for all possible orders $\vec\alpha$ of the components of $\alpha$, if $\vec\alpha$ is of type $\vec P$: just compare the two dual graphs. But even more, knowing how to map $\vec P$ and its dual graph to $\vec\alpha$ and its dual graph, we actually get a concrete homotopy equivalence doing exactly that.

This means that it take $O(L^2)$ time to check if some $(\vec m,\vec t)\in\calD$ with $\Vert(\vec m,\vec t)\Vert\le\const\cdot L^2$ corresponds to $\vec\alpha\in\CP$ and, if this is the case, to construct a concrete proper homotopy equivalence $\phi_{\vec\alpha}$ sending $\vec P$ to $\vec\alpha$. Since there are at most $L^{4(3g-3+b)}$ vectors to check we get:

\begin{lemma}
It takes time $O(L^{12g-10+4b})$ to do (1) produce $\CP$, and (2) for each $\vec\alpha\in\CP$ produce a proper homotopy equivalence  $\phi_{\vec\alpha}:\Sigma\to\Sigma$ mapping $P$ to $\vec\alpha$ satisfying
$$\iota(\phi_{\vec\alpha}(\beta),\vec P+B) \le \const \cdot \iota(\beta,\vec P+B) \cdot L^2$$
for every simple multicurve $\beta$. \qed
\end{lemma}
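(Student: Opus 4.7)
The plan is to combine an enumeration of the admissible Dehn-Thurston vectors with a polynomial per-vector processing time, along the lines sketched in the paragraph preceding the lemma.

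First I would count the candidate vectors: an element of $\calD$ is a pair $(\vec m,\vec t)$ of $6g-6+2b$ integer coordinates, and the constraint $\Vert(\vec m,\vec t)\Vert \le \const \cdot L^2$ gives at most $O(L^{2(6g-6+2b)}) = O(L^{12g-12+4b})$ such vectors. For each of them I must (i) realize the associated weighted simple multicurve $\alpha = \Pi_{\vec P,B}(\vec m,\vec t)$ together with its dual graph; (ii) decide whether $\alpha$ is a pants decomposition, i.e.\ that its components are essential, pairwise non-parallel, and have a pair-of-pants complement; (iii) test, for each of the $2^{3g-3+b}(3g-3+b)!$ orderings and orientations, whether the resulting $\vec\alpha$ has the same type as $\vec P$ by comparing dual graphs; and (iv) if so, extract from the dual-graph isomorphism a concrete proper homotopy equivalence $\phi_{\vec\alpha}$ with $\phi_{\vec\alpha}(\vec P)=\vec\alpha$. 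Steps (i)--(iii) are combinatorial operations on data of total size $O(L^2)$, and the number of orderings depends only on $g$ and $b$; they contribute $O(L^2)$ per vector.

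For (iv) I would proceed pair-of-pants by pair-of-pants: the dual-graph matching pins down which pair of pants in $\Sigma\setminus\vec\alpha$ corresponds to which pair in $\Sigma\setminus\vec P$, and inside each piece one writes down an explicit homeomorphism using the Dehn-Thurston twist data to dictate how many times seams wrap around cuffs. The complexity of this construction in each piece is dominated by the local combinatorial weight of $\vec\alpha$, hence globally by $\ell_{\vec P,B}(\vec\alpha) = O(L^2)$. This gives a per-vector cost of $O(L^2)$ and a total running time of $O(L^{12g-12+4b}) \cdot O(L^2) = O(L^{12g-10+4b})$. Moreover, by the very same construction, $\phi_{\vec\alpha}$ inflates the local number of crossings of any simple multicurve with $\vec P+B$ by a factor at most $\const \cdot \ell_{\vec P,B}(\vec\alpha) = \const \cdot L^2$, so translating back and forth between intersection numbers and combinatorial length via Lemma \ref{lot of noise1} yields
$$\iota(\phi_{\vec\alpha}(\beta),\vec P+B) \le \const \cdot \iota(\beta,\vec P+B) \cdot L^2.$$

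The main obstacle, in my view, is step (iv): certifying that the piece-by-piece construction really does have coarse Lipschitz constant linear in $L^2$ when measured by intersection with $\vec P+B$. The enumeration and combinatorial checks in the other steps are routine; the genuine content is in bounding the distortion of the map built from the dual-graph identification, and this is where the $L^2$ factor that bumps the exponent from $12g-12+4b$ up to $12g-10+4b$ ultimately comes from.
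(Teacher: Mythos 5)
Your proof takes the same route as the paper: enumerate the $O(L^{12g-12+4b})$ Dehn--Thurston vectors with norm $\le \const\cdot L^2$, spend $O(L^2)$ per vector to test whether the associated multicurve is a pants decomposition of the right ordered type and to build $\phi_{\vec\alpha}$ from the dual-graph matching, and observe that the constructed map distorts crossings with $\vec P+B$ by a factor $\const\cdot\ell_{\vec P,B}(\vec\alpha)=O(L^2)$. This matches the paper's argument step for step (the paper is, if anything, terser on the distortion bound, which it simply asserts), so your proposal is correct.
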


Still assuming that $\phi(\gamma)=\eta$ we note that we have
$$\phi=\phi_{\phi(\vec P)}\circ T^{\vec n}_{\vec P}$$
for some multitwist $T^{\vec n}_{\vec P}$ along $\vec P$. Arguing as above, that is, using Ivanov's lemma we get a bound on the norm of $\vec n$:
$$\Vert\vec n\Vert\le\const\cdot L^2.$$
On the other hand, if we are given $\vec\alpha\in\CP$ and $\vec n \in \BZ^{3g-3+b}$ with at most norm $\const\cdot L^2$, it takes $O(L^2)$ to check if $(\phi_{\vec\alpha}\circ T_{\vec P}^{\vec n})(\gamma)=\eta$. Taking all of this together we get:

\begin{quote}
{\em It takes $O(L^{18g+6b-14})$ to detect if $\gamma$ and $\eta$ are of the same type.}
\end{quote}

We are done.

\bibliographystyle{amsplain}
\bibliography{references}

\providecommand{\bysame}{\leavevmode\hbox to3em{\hrulefill}\thinspace}
\providecommand{\MR}{\relax\ifhmode\unskip\space\fi MR }
\providecommand{\MRhref}[2]{%
  \href{http://www.ams.org/mathscinet-getitem?mr=#1}{#2}
}
\providecommand{\href}[2]{#2}
\begin{thebibliography}{10}

\bibitem{Holt}
D.~J. Buckley and Derek~F. Holt, \emph{The conjugacy problem in hyperbolic
  groups for finite lists of group elements}, Internat. J. Algebra Comput.
  \textbf{23} (2013), no.~5, 1127--1150. \MR{3096315}

\bibitem{Despre-Lazarus}
Vincent Despr\'{e} and Francis Lazarus, \emph{Computing the geometric
  intersection number of curves}, J. ACM \textbf{66} (2019), no.~6, Art. 45,
  49. \MR{4040285}

\bibitem{MR2850125}
Benson Farb and Dan Margalit, \emph{A primer on mapping class groups},
  Princeton Mathematical Series, vol.~49, Princeton University Press,
  Princeton, NJ, 2012. \MR{2850125}

\bibitem{Ivanov}
Nikolai~V. Ivanov, \emph{Subgroups of {T}eichm\"{u}ller modular groups},
  Translations of Mathematical Monographs, vol. 115, American Mathematical
  Society, Providence, RI, 1992, Translated from the Russian by E. J. F.
  Primrose and revised by the author. \MR{1195787}

\bibitem{Khan}
Bilal Khan, \emph{The structure of automorphic conjugacy in the free group of
  rank two}, Computational and experimental group theory, Contemp. Math., vol.
  349, Amer. Math. Soc., Providence, RI, 2004, pp.~115--196. \MR{2077762}

\bibitem{Monika}
Monika Kudlinska, \emph{Algorithm for filling curves on surfaces}, Geometriae
  Dedicata (2020).

\bibitem{MR1783856}
Gilbert Levitt and Karen Vogtmann, \emph{A {W}hitehead algorithm for surface
  groups}, Topology \textbf{39} (2000), no.~6, 1239--1251. \MR{1783856}

\bibitem{Penner-Harer}
R.~C. Penner and J.~L. Harer, \emph{Combinatorics of train tracks}, Annals of
  Mathematics Studies, vol. 125, Princeton University Press, Princeton, NJ,
  1992. \MR{1144770}

\bibitem{Thesis}
Thi~Hanh Vo, \emph{Thesis to be}, In preparation.

\bibitem{MR1503309}
J.~H.~C. Whitehead, \emph{On equivalent sets of elements in a free group}, Ann.
  of Math. (2) \textbf{37} (1936), no.~4, 782--800. \MR{1503309}

\end{thebibliography}

\end{document}